\theoremstyle{plain}
\newtheorem{theorem}{Theorem}
\newtheorem*{thm}{Theorem}
\newtheorem*{thma}{Theorem 10 bis}
\newtheorem{proposition}[theorem]{Proposition}
\newtheorem{corollary}[theorem]{Corollary}
\newtheorem{lemma}[theorem]{Lemma}
\newtheorem*{main question}{Question}
\newtheorem{question}{Question}
\newtheorem*{example}{Example}
\theoremstyle{definition}
\newtheorem{definition}[theorem]{Definition}
\newtheorem*{remark}{Remark}
\newtheorem*{remarks}{Remarks}
\DeclareMathOperator{\HH}{H}
\renewcommand{\H}{\HH}
\DeclareMathOperator{\Gal}{Gal}
\DeclareMathOperator{\Hom}{Hom}
\DeclareMathOperator{\im}{im}
\def\R{{\mathbb R}}
\def\C{{\mathbb C}}
\def\F{{\mathbb F}}
\def\G{{\mathbb G}}
\def\Q{{\mathbb Q}}
\def\Z{{\mathbb Z}}
\DeclareMathOperator{\princ}{princ}
\def\NDT{{Nguy\^e\~n Duy T\^an}}
\begin{document}
\title[Galois cohomology of unipotent groups]{On Galois cohomology of unipotent algebraic groups over local fields} 
 \author{ \NDT }
 \address{ 
Universit\"at Duisburg-Essen, FB6, Mathematik, 45117 Essen, Germany and Institute of Mathematics, 18 Hoang Quoc Viet, 10307, Hanoi.  Vietnam 
} 
\email{duy-tan.nguyen@uni-due.de}
\thanks{Partially supported by the NAFOSTED and the ERC/Advanced Grant 226257}

\date{}         

\begin{abstract} In this paper, we give a necessary and sufficient condition for the finiteness of Galois cohomology of  unipotent groups  over local fields of positive characteristic.

AMS Mathematics Subject Classification (2010): 20G10, 11E72. 
\end{abstract}
\maketitle 

\section*{Introduction}
Let $k$ be a field, $G$ a linear algebraic $k$-group. We denote by $k_s$ the separable closure of $k$ in an algebraic closure $\bar{k}$, by $\H^1(k,G):=\H^1(\Gal(k_s/k),G(k_s))$ the usual first Galois cohomology set. It is important to know the finiteness of the Galois cohomology set of algebraic groups over certain arithmetic fields such as local or global fields. Here, by a global field we mean either of the following: an algebraic number field, i.e., a finite extension of $\Q$; or a global function field, i.e., the function field of an algebraic curve over a finite field, equivalently, a finite extension of $\F_q(t)$, the field of rational functions in one variable over the finite field with $q$ elements. By a local field, we mean the completion of a global field at one of its place. 
So, a local field of characteristic $0$  is the real numbers $\R$, or the complex numbers $\C$, or a finite extension of $\Q_p$, the field of $p$-adic numbers, and a local field of characteristic $p>0$ is a finite extension of $\F_q((t))$, the field of formal Laurent series over $\F_q$. The following result is well known.

\begin{thm}
\mbox{ }
\begin{enumerate}
\item[(a)] (Borel-Serre) Let $k$ be a local field of characteristic 0, $G$ a linear algebraic group defined over $k$. Then $\H^1(k,G)$ is finite.
\item[(b)] (J.Tits) Let $k$ be a local field of positive characteristic, $G$ a connected reductive group defined over $k$. Then $\H^1(k,G)$ is finite.
\end{enumerate}
\end{thm}
For (a), we refer readers to \cite[Chapter III, Section 4, Theorem 4]{Se1}, and for (b), to \cite[Chapter III, Section 4, Remarks, p. 146]{Se1} or \cite[Proposition 7.2]{GiMB}. Oesterl\'e also gives examples of connected unipotent groups with infinite Galois cohomology over local fields of positive characteristic (see \cite[p. 45]{Oe}). 
\begin{example} [J. Oesterl\'e] Let $k=\F_q((t))$ of characteristic $p>2$, and  $G=\{(x,y)\in \G_a\times \G_a \mid y^p-tx^p-x=0\}$. Then $G$ is a  connected unipotent $k$-group and $\H^1(k,G)$ is infinite.
\end{example}

The following question is quite naturally arisen.  
\begin{main question} Let $k$ be a local field of positive characteristic, $G$ a unipotent group defined over $k$. When is $\H^1(k,G)$ finite?
\end{main question}

 In this article, we will give a complete answer for this question. 

In Section 1, first we present several technical lemmas concerning the images of additive polynomials in local fields which are needed in the sequel,  and  then we introduce the notion of unipotent groups of Rosenlicht type, see Definition~\ref{def:Rosenlicht type}. In Section 2, we apply the results of Section 1 to give a necessary and sufficient condition for the finiteness of the Galois cohomology of an arbitrary unipotent group over local function fields, see Theorem~\ref{theorem:main}. It says roughly that a unipotent group over a local function field  has a finite Galois cohomology set if and only if it has a decomposition series such that each factor is a unipotent group of Rosenlicht type. Section 3 deals with some calculations on unipotent groups of Rosenlicht type, and we also discuss about a question of Oesterl\'e.

We recall after Tits that a unipotent $k$-group $G$ is called $k$-{\it wound} if every $k$-homomorphism (or even, $k$-morphism as in \cite{KMT}) $\G_a\to G$ is constant. A polynomial $P:=P(x_1,\ldots,x_n)$ in $n$ variables $x_1,\ldots,x_n$ with coefficients in $k$ is said to be {\it  additive} if $P(x+y)=P(x)+P(y)$, for any two vectors $x\in k^n, y\in k^n$. If this is the case and if $k$ is infinite, $P$ is the so-called $p$-polynomial, i.e., $P(x_1,\ldots, x_n)=\sum_i\sum_j c_{ij}x_i^{p^j}$, a $k$-linear combination of $x_i^{p^j}$. If,  for each $i$, $m_i$ is the largest exponent $j$ for which $c_{ij}\neq 0$, then the {\it principal part } of $P$ is the polynomial $P_{\princ}:=c_{1m_1}x_1^{p^{m_1}}+\cdots+c_{nm_n}x_n^{p^{m_n}}$.

All algebraic groups considered in this paper are {\it linear} algebraic groups (in the sense of \cite{Bo}). In particular, they are smooth.

\section{Images of additive polynomials in local fields}
Let us first recall the notion of valuation independence (see \cite{DK} or \cite{Ku}). Let $(K,v)$  be a valued field, $L$ a subfield of $K$, and $(b_i)_{i\in I}$ a system of non-zero elements in $K$, with $I\not=\emptyset$. This system is called $L$-{\it valuation independent} if the following holds: for every choice of elements $a_i\in L$ such that $a_i\not=0$ for only finitely many $i\in I$, one has
$$v(\sum_{i\in I}a_ib_i)=\min_{i\in I}v(a_ib_i).$$
If $V$ is an $L$-subvector space of $K$, then this system is called a {\it valuation basis} of $V$ if it is a basis of $V$ and $L$-valuation independent.

\begin{lemma} 
\label{lemma:1}
Let $k$ be a field of characteristic $p>0$, $v$ a discrete valuation on $k$ whose value group is $\Z$. Let
 \[P(T)=\sum_{i=1}^rc_iT_i^{p^m}+\sum_{i=1}^r\sum_{j=1}^{m-1}c_{ij}T_i^{p^{m-j}}+ T_1\]
be a $p$-polynomial with coefficients in $k$, where $c_1,\ldots, c_r$ is a $k^{p^m}$-valuation basis of $k$. Take $a\in k$ and assume that the set $\{v(a-y)\mid y\in\im P\}$ admits a maximum. Then there exist two constants $\alpha\leq \beta$ depending only on $P$ (not depending on $a$) such that this maximum belongs to the segment $[\alpha,\beta]$ or $\infty$.   
\end{lemma}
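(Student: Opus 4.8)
The plan is to isolate two quantitative facts about the additive map $P\colon k^r\to k$, each with a constant depending only on $P$, and then to read off the lemma. Write $P=P_{\princ}+Q$, where $P_{\princ}(T)=\sum_{i=1}^r c_iT_i^{p^m}$ is the principal part and $Q(T)=\sum_{i=1}^r\sum_{j=1}^{m-1}c_{ij}T_i^{p^{m-j}}+T_1$ collects the lower and linear terms, and recall that $\im P$ is a subgroup of $(k,+)$ since $P$ is additive. \textbf{Fact A:} there is an integer $\gamma=\gamma(P)$ such that $v(x)\ge\gamma$ implies $x\in\im P$. \textbf{Fact B:} there is an integer $\alpha=\alpha(P)$ such that for every $a\in k$ there exists $y\in\im P$ with $v(a-y)\ge\alpha$. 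Granting these, set $\beta:=\gamma-1$ (one will have $\alpha<0\le\beta$). Let $a\in k$ be such that $M:=\max\{v(a-y):y\in\im P\}$ exists. Fact B gives $M\ge\alpha$; and if $M$ is finite, say $v(a-y_0)=M$ with $y_0\in\im P$, then $M\le\beta$, for otherwise $v(a-y_0)\ge\gamma$, whence $a-y_0\in\im P$ by Fact A and $a=y_0+(a-y_0)\in\im P$, forcing $M=\infty$, a contradiction. Thus $M\in[\alpha,\beta]$ or $M=\infty$, which is the assertion.

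For Fact A, I would restrict $P$ to its first variable: $\phi(t):=P(t,0,\dots,0)=t+\psi(t)$ with $\psi(t)=c_1t^{p^m}+\sum_{j=1}^{m-1}c_{1j}t^{p^{m-j}}$, an additive polynomial every monomial of which has degree $\ge p$ — the linear term $t$ survives precisely because of the summand $T_1$ in $P$. For $v(t)\ge0$ one gets $v(\psi(t))\ge p\,v(t)+m_0$, where $m_0$ is the smallest valuation among the coefficients of $\psi$, so on $\{x:v(x)\ge\gamma\}$ with $\gamma:=\max\{1,\lceil(1-m_0)/(p-1)\rceil\}$ one has $v(\psi(t))\ge v(t)+1$. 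Hence for $v(a)\ge\gamma$ the iteration $t^{(0)}:=a$, $t^{(n+1)}:=a-\psi(t^{(n)})$ satisfies $v(t^{(n+1)}-t^{(n)})=v(\psi(t^{(n)}-t^{(n-1)}))\ge v(t^{(n)}-t^{(n-1)})+1$, so the sequence is Cauchy and — this is where completeness of $k$ enters — converges to some $t^{\ast}\in k$ with $\phi(t^{\ast})=a$; then $a=P(t^{\ast},0,\dots,0)\in\im P$, proving Fact A. As a by-product $\im P$ is open, hence a closed subgroup of $k$, so the hypothesis that the maximum exists is in fact automatic.

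For Fact B the two ingredients are: $(1)$ $P_{\princ}$ is surjective onto $k$, because the $c_i$ span $k$ over $k^{p^m}$ and $u\mapsto u^{p^m}$ maps $k$ onto $k^{p^m}$; and $(2)$ the valuation-independence hypothesis, which shows that the \emph{canonical} preimage $w=(w_i)$ of an element $b\in k$ under $P_{\princ}$ — obtained by writing $b=\sum_i\mu_ic_i$ with $\mu_i\in k^{p^m}$ and setting $w_i:=\mu_i^{1/p^m}$ — satisfies $v(w_i)\ge(v(b)-v(c_i))/p^m$ for every $i$. Given $a$, I would iterate: put $v_0:=a$; if $v_n=0$ then $a\in\im P$ and we are done; otherwise let $w_n$ be the canonical preimage of $v_n$ under $P_{\princ}$ and set $v_{n+1}:=v_n-P(w_n)=-Q(w_n)$, so that $y_n:=P(w_0+\dots+w_{n-1})\in\im P$ satisfies $a-y_n=v_n$. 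Substituting the bound on $v(w_{n,i})$ into $Q(w_n)=\sum_{i,j}c_{ij}w_{n,i}^{p^{m-j}}+w_{n,1}$ and using $p^{m-j}\ge p$ gives $v(v_{n+1})\ge v(v_n)/p-C'$ when $v(v_n)\le0$ and $v(v_{n+1})\ge-C'$ otherwise, for a constant $C'=C'(P)$ assembled from the $v(c_i)$, the $v(c_{ij})$ and the exponents. Thus $v(v_n)\ge L_n$, where $L_0=v(a)$ and $L_{n+1}=g(L_n)$ with $g(t):=\min\{t,0\}/p-C'$; since $g$ is non-decreasing with unique fixed point $t^{\ast}:=-C'p/(p-1)\le0$, the sequence $L_n$ converges monotonically to $t^{\ast}$, so once $n$ is large $v(v_n)\ge L_n>t^{\ast}-1$ and hence $v(a-y_n)=v(v_n)\ge\alpha:=\lceil t^{\ast}\rceil-1$. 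This is Fact B, and clearly $\alpha<0\le\beta$.

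The main obstacle is to recognize that the two bounds come from genuinely different mechanisms, each tied to one hypothesis of the lemma: the upper bound relies on the linear term $T_1$, which makes $\phi$ separable and forces the contraction/Newton iteration to converge — without it $\im P$ need not even be open — while the lower bound relies on the surjectivity of $P_{\princ}$, which in turn rests on the $c_i$ forming a $k^{p^m}$-basis of $k$. Carrying out either iteration is then a matter of careful bookkeeping with valuations, the crucial point being that valuation independence is exactly what converts control of $v(P_{\princ}(w))$ into control of each individual coordinate valuation $v(w_i)$ — the estimate on which the entire lower bound turns.
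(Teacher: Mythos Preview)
Your overall architecture is sound, but Fact~A has a genuine gap: the Newton iteration you run to show that every $x$ with $v(x)\ge\gamma$ lies in $\im P$ requires $k$ to be \emph{complete} with respect to $v$ (you say so yourself), and the lemma assumes only a discrete valuation on an arbitrary field of characteristic $p$. Over an incomplete $k$ the Cauchy sequence $(t^{(n)})$ need not converge, so you have not produced a preimage of $a$ in $k^r$, and Fact~A fails as stated.

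The paper's proof avoids this entirely by never trying to land inside $\im P$. After translating so that the maximum is $v(a)$, it simply exhibits, whenever $v(a)>\beta$, a \emph{single} element $y=P(a,0,\dots,0)\in\im P$ with $v(a-y)>v(a)$, contradicting maximality directly; and whenever $v(a)<\alpha$, it writes $a=\sum c_ia_i^{p^m}$ via the valuation basis and checks $v(a-P(a_1,\dots,a_r))>v(a)$, again a one-step contradiction. No limit is taken, so completeness is irrelevant. Your argument is easily repaired along the same lines: replace Fact~A by its one-step shadow ``if $v(x)\ge\gamma$ then $v(x-\phi(x))>v(x)$'', which you already proved en route and which suffices to contradict maximality of $M$. (Your Fact~B, by contrast, is a legitimate finite iteration and needs no completeness; the only slip there is that the linear term $w_{n,1}$ contributes a factor $p^{-m}$, not $p^{-1}$, to the recursion, but the fixed-point argument goes through unchanged with the corrected constant.)
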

\begin{proof}
Assume that $y_0\in k$ such that $v(a-y_0)$ is the maximum of $\{v(a-y)\,\mid\, y\in P(k\times \cdots\times k)\}$. After replacing $a$ by $a-y_0$ we can assume that $y_0=0$.
 
 Let $c_{10}:=c_1$ and $I$ the set of indices $j$, $0\leq j\leq m-1$, such that $c_{1j}\not=0$. We set 
\[\beta:=\max_{j\in I}\left\{\frac{-v(c_{1j})}{p^{m-j}-1}\right \}.\]

Any monomial  of $P(T_1,\ldots,T_r)-P_{\princ}(T_1,\ldots,T_r)$ is
of the form $\lambda T_j^{p^{m-s}}$, $\lambda\in k^*,1\leq j\leq r,s\geq 1$,
 and for such a monomial, we set
\[a(\lambda T_j^{p^{m-s}})=a_{\lambda,s,j}=\frac{v(\lambda)-v(c_j)}{p^{m}-p^{m-s}},\]
 and set 
\[\alpha :=\min_{\lambda,j,s}\{v(\lambda)+p^{m-s}a_{\lambda,s,j}\}.\]
 
It is trivial that $0\in I$, hence $\beta\geq \frac{-v(c_1)}{p^m-1}$. On the other hand, for the monomial $T_1$ in $P-P_{\princ}$, the constant
$a(T_1)=a_{1,m,1}=\frac{v(1)-v(c_1)}{p^m-1}=\frac{-v(c_1)}{p^m-1}$,
hence $\alpha\leq \frac{-v(c_1)}{p^m-1}\leq \beta$. 
 
We shall show that $\alpha\leq v(a)\leq \beta$ or $v(a)=\infty$.
 
a) Suppose that $\infty> v(a)>\beta$. Then for all $j\in I$, we have 
\[v(a)>\beta\geq \frac{-v(c_{1j})}{p^{m-j}-1},\]
and hence, 
\[v(c_{1j}a^{p^{m-j}})=v(c_{1j})+p^{m-j}v(a)>v(a).\]
Now we set
\[y:=\sum_{j\in I}c_{1j}a^{p^{m-j}}+a=P(a,0,\cdots,0)\in \im P.\]
Then we get
\[v(a-y)=v(\sum_{j\in I}c_{1j}a^{p^{m-j}})\geq \min_{j\in I} v(c_{1j}a^{p^{m-j}})>v(a),\]
a contradiction.

b) Now suppose that $v(a)<\alpha$. Since $c_1,\ldots,c_r$ is a $k^{p^m}$-valuation basis, we can write 
\[a=c_1a_1^{p^m}+\cdots+c_ra_r^{p^m},\]
 where $a_i\in k$ for all $i$ and 
\[v(a)=\min_i v(c_ia_i^{p^m}).\]

For any monomial $\lambda T_j^{p^{m-s}}$ of $P-P_{\princ}$, appearing in $P(T_1,..., T_r)$,
 $\lambda\in k^*,1\leq j\leq r,s\geq 1$, if  $v(a_j)<a_{\lambda,s,j}$
   then by the definition of $a_{\lambda,s,j}$, we have
\[v(\lambda a_j^{p^{m-s}})=v(\lambda)+p^{m-s}v(a_j)>v(c_j)+
p^{m}v(a_j)\geq v(a).\]

Also, if   $v(a_j)\geq a_{\lambda,s,j}$ then by the definition of $\alpha$, we have
\[v(\lambda a_j^{p^{m-s}})\geq v(\lambda)+p^{m-s}a_{\lambda,s,j}\geq \alpha > v(a) .\]
Thus, for all $j$, we always have
\[ v(\lambda a_j^{p^{m_j-s}}) > v(a).\]
Hence
\[ v\big(a-P(a_1,\ldots,a_r)\big)=v\big(P(a_1,...,a_r) -P_{\princ}(a_1,...,a_r)\big) > v(a),\]
a contradiction.
 \end{proof}
 On $\F_q((t))$, one has the natural valuation $v_t$ defined as: $v_t(\sum a_n t^n)=\inf\{n \mid a_n\not=0 \}$.
 
\begin{lemma}
\label{lemma:key}
 Let $k=\F_q((t))$ be a local field of characteristic $p>0$, with the natural valuation $v=v_t$. Let  $P(T_1,\ldots,T_r)$ be a separable $p$-polynomial with coefficients in $k$ and the principal part
$P_{\princ}(T_1,\ldots,T_r)=\sum_{i=1}^rc_iT_i^{p^m}.$
Assume that $c_1,\ldots,c_r$ is a $k^{p^m}$-basis of $k$ and $v(c_1),\ldots, v(c_r)$ are pairwise distinct modulo $p^m$. Then the quotient group $k/\im P$ is finite.    
 \end{lemma}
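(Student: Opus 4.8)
The plan is to combine a \emph{lower} reduction---every element of $k$ is congruent modulo $\im P$ to one of large valuation---with an \emph{upper}, Hensel-type statement---$\im P$ contains every element of sufficiently large valuation---and then conclude by counting.

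First I would observe that the hypothesis is stronger than it looks. If $a=\sum_i c_i\delta_i^{p^m}$ with $\delta_i\in k$ (possible since $c_1,\dots,c_r$ is a $k^{p^m}$-basis of $k$), then $v(c_i\delta_i^{p^m})=v(c_i)+p^m v(\delta_i)\equiv v(c_i)\pmod{p^m}$, so the valuations of the nonzero terms are pairwise distinct by hypothesis; hence there is no cancellation and $v(a)=\min_i v(c_i\delta_i^{p^m})$. Thus $c_1,\dots,c_r$ is in fact a $k^{p^m}$-valuation basis of $k$, which puts us in position to run the argument used to prove Lemma 1.

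Next, following part (b) of that proof, I would show there is a constant $\alpha$ depending only on $P$ such that every $a\in k$ with $v(a)<\alpha$ admits a tuple $\delta=(\delta_1,\dots,\delta_r)$ with $v(a-P(\delta))>v(a)$: writing $a=\sum_i c_i\delta_i^{p^m}$ one has $a-P(\delta)=-(P-P_{\princ})(\delta)$, and for each monomial $\lambda T_j^{p^{m-s}}$ of $P-P_{\princ}$ the dichotomy $v(\delta_j)<a_{\lambda,s,j}$ versus $v(\delta_j)\ge a_{\lambda,s,j}$ yields $v(\lambda\delta_j^{p^{m-s}})>v(a)$---the first case using the valuation basis property in the form $v(c_j)+p^m v(\delta_j)\ge v(a)$, the second the definition of $\alpha$. (If $P=P_{\princ}$ then $\im P=\sum_i c_i k^{p^m}=k$ and there is nothing to prove, so we may assume $P\ne P_{\princ}$, so that $\alpha$ is finite.) Since $v$ is discrete and $P$ is additive, iterating this step finitely many times shows that $k=\im P+t^{N_0}\O_k$ with $N_0=\lceil\alpha\rceil$.

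For the upper bound I would use separability of $P$: some linear coefficient is nonzero, say that of $T_{i_0}$, equal to $d\ne 0$. Restricting $P$ to the $i_0$-th coordinate gives a one-variable $p$-polynomial $Q(T)=dT+\sum_{s\ge 1}e_s T^{p^s}$, and a standard successive-approximation argument---starting from $a/d$, the residual has valuation at least $\min_s(v(e_s)+p^s(v(a)-v(d)))$, which exceeds $v(a)$ once $v(a)$ is large enough---shows, using completeness of $k$, that $Q$, hence $P$, maps onto $t^{N_1}\O_k$ for some integer $N_1\ge N_0$. Then $k/\im P$ is a quotient of $t^{N_0}\O_k/t^{N_1}\O_k\cong\O_k/t^{N_1-N_0}\O_k$, which is finite because $k$ has finite residue field. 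I expect the delicate point to be the reduction step, the analogue of Lemma 1(b): one must bound $v((P-P_{\princ})(\delta))$ from below uniformly, even when some $v(\delta_j)$ is very negative, and it is exactly the valuation basis property secured in the first step---together with the thresholds $a_{\lambda,s,j}$---that makes the case analysis go through.
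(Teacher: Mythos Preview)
Your argument is correct, and it differs from the paper's in a pleasant way. The paper first performs a linear change of variables to force one linear coefficient to be $1$, checks that the leading coefficients remain a $k^{p^m}$-valuation basis, and then invokes the optimal approximation property of [DK] so that Lemma~1 applies in full: for each $a\notin\im P$ the maximum of $\{v(a-y):y\in\im P\}$ lies in $[\alpha,\beta]$, and a short descent with coefficients in $\F_q$ pins $a$ down to one of finitely many cosets. You instead split the task into two independent halves. For the lower half you re-run only part~(b) of the proof of Lemma~1 (which, as you note, needs nothing about the linear part of $P$ beyond $P\neq P_{\princ}$, only the valuation-basis property you established first); for the upper half you bypass both Lemma~1(a) and the [DK] black box, proving directly by a Hensel iteration on the single variable carrying a nonzero linear term that $\im P\supset t^{N_1}\O_k$. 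The trade-off: your route is more elementary and self-contained, avoiding the appeal to [DK]; the paper's route yields an explicit bound on $|k/\im P|$ in terms of $\alpha,\beta$ and situates the result within the optimal-approximation framework that is reused elsewhere.
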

 
 \begin{proof}
 We write 
 \[P(T_1,\ldots,T_r)=\sum_{i=1}^rc_iT_i^{p^m}+(\cdots)+a_1T_1+\cdots+a_rT_r.\]
 Let $I:=\{i\mid a_i \not= 0 \}$. Then $I$ is a non-empty set since $P$ is separable. Since $v(c_1),\ldots, v(c_r)$ are pairwise distinct modulo $p^m$, there exists the unique index $i_0\in I$ such that 
\[ v(c_{i_0}a_{i_0}^{-p^m})=\max_{i\in I} v(c_i a_i^{-p^m}).\]
We may and shall assume that $i_0=1$. Let us change the variables $ X_1 :=a_1T_1+\cdots +a_rT_r$, and $X_i:=T_i$ for all $i>1$. Then the polynomial $P$  becomes
\[ Q(X_1,\ldots,X_r)=\sum_{i=1}^rb_iX_i^{p^m}+(\cdots)+X_1,\]
where $b_1= a_1^{-p^m}c_1$, $b_i =c_i-c_1 a_1^{-p^m}a_i^{p^m}$, for all $i>1$. Then $\im P=\im Q$. We also have
$v(b_1)=v(c_1)-p^mv(a_1)$, and $v(b_i)=v(c_i)$ for all $i\geq 2$. So $v(b_1),\ldots, v(b_r)$ are still pairwise distinct modulo $p^m$.  Then it is clear that $b_1,\ldots, b_r$ is a $k^{p^m}$-valuation basis of $k$.

 By \cite{DK}, we know that the images of additive polynomials in $(\F_q((t)),v_t)$ have the optimal approximation property, that is, for every $z\in k=\F_q((t))$ and every additive polynomial  $F(X_1,\ldots, X_n)$ with coefficients in $\F_q((t))$, the set  $\{v(z-y)\,\mid\, y\in \im F\}$ admits a maximum.

Let $\alpha,\beta$ be the constants depending only on $Q$ as in Lemma~\ref{lemma:1}. Take $a$ in $k$ and suppose that $a\not\in \im Q$. Then by Lemma~\ref{lemma:1}, there is $y_1\in \im Q$ such that $v(a-y_1)=m_1$, where $\alpha\leq m_1 \leq \beta$. There is some $j_1\in \F_q$ such that $v(a-y_1-j_1t^{m_1})=m^\prime_2,$  where $m^\prime_2>m_1$. Again by Lemma~\ref{lemma:1}  applying for $a-j_1t^{m_1}$, there is $y_2\in \im Q$ such that 
 \[ v(a-y_2-j_1t^{m_1})=m_2 \] 
is a maximum in $\{v(a-j_1 t^{m_1}-y) \mid y\in \im Q \}$ and  $m_1<m^\prime_2\leq m_2\leq \beta$, or $m_2=\infty$. Similarly, if $m_2\not=\infty$ then there are some $y_3\in \im Q$ and $j_2\in \F_q$ such that
 \[ v(a-y_3-j_1t^{m_1}-j_2t^{m_2})=m_3,\]
 where $m_3>m_2$ and $m_3\leq \beta$ or $m_3=\infty$. Denote by $[x]$ the largest integer not exceeding $x$. Since the segment $[\alpha,\beta]$ has at most $u:=[\beta-\alpha]+1$ natural numbers, there exist $y_{s+1}\in \im Q$ and $j_1,\ldots,j_s\in \F_q$, where $s\leq u$, and $m_1<\cdots<m_s$, with $m_i\in [\alpha,\beta]\cap\mathbb{N}$, such that
 \[v(a-y_{s+1}-j_1t^{m_1}-j_2t^{m_2}-\cdots-j_st^{m_s})=m_{s+1}\]
 is a maximum in $\{ v(a-j_1t^{m_1}-j_2t^{m_2}-\cdots-j_st^{m_s}-y)\mid y\in \im Q\}$ and $m_{s+1}>\beta.$
 Again by Lemma~\ref{lemma:1},  we have $m_{s+1}=\infty$ and  $a-y_{s+1}-j_1t^{m_1}-j_2t^{m_2}-\cdots-j_st^{m_s}\in \im Q$, hence
\[ a\in \sum_{j\in \F_q}\sum_{m\in [\alpha,\beta]\cap \mathbb{N}} jt^m+\im Q.\]
Therefore, $k/\im P=k/\im Q$ has at most $q([\beta-\alpha]+1)$ elements.
 \end{proof}
 
 \begin{lemma}[{\cite[Lemma 4]{DK}}] 
 \label{lemma:DK}
Let k be a local field of characteristic $p>0$, $P=f_1(T_1)+\cdots+f_r(T_r)$ an additive \textup{(}i.e. $p$-\textup{)} polynomial with  coefficients in $k$ in $r$ variables, the principal part of which vanishes nowhere over $k^r\setminus\{0\}$. Let $S=\im(P)=f_1(k)+\cdots+f_r(k)$. Then there are additive polynomials $g_1,\ldots,g_s\in k[X]$ in one variable $X$
such that
\begin{enumerate}
\item $S=g_1(k)+\cdots+g_s(k)$;
\item all polynomials $g_i$ have the same degree $d=p^\nu$, for some non-negative integer $\nu$;
\item the leading coefficients $b_1,\ldots, b_s$ of $g_1,\ldots, g_s$ are such that $v(b_1),\ldots, v(b_s)$ are distinct elements of $\{0,1,\ldots,d-1\}$.
\end{enumerate}
\end{lemma}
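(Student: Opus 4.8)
\emph{Plan.} I would transform the family $f_1,\dots,f_r$ in three reduction steps. Call a finite family of one-variable additive polynomials over $k$ a \emph{good system} of degree $d=p^\nu$ if all of them have degree $d$ and the valuations of their leading coefficients are pairwise distinct elements of $\{0,\dots,d-1\}$; the assertion is that $S$ equals the sum of the images of the members of some good system. \emph{Step 1 (equalise degrees).} For each $\ell\ge 0$ one has $k=\bigoplus_{a=0}^{p^\ell-1}t^a k^{p^\ell}$, and this is a valuation decomposition (the summands have valuations pairwise incongruent $\bmod\,p^\ell$), so for any one-variable additive $f$ with leading coefficient $c$,
\[
f(k)=\sum_{a=0}^{p^\ell-1}f\bigl(t^a k^{p^\ell}\bigr)=\sum_{a=0}^{p^\ell-1}\bigl(f(t^aX^{p^\ell})\bigr)(k),
\]
each $f(t^aX^{p^\ell})$ being additive of degree $p^\ell\deg f$ with leading coefficient $c\,t^{a\deg f}$. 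Applying this to each $f_i$ with $\ell=\nu-m_i$, where $p^\nu:=\max_i\deg f_i$, exhibits $S$ as a sum of images of one-variable additive polynomials all of degree $d=p^\nu$. \emph{Step 2 (normalise valuations).} For a one-variable additive $g$ of degree $d$ and $n\in\Z$ one has $\bigl(g(t^{-n}X)\bigr)(k)=g(k)$ while the valuation of the leading coefficient changes by $-nd$; hence after such substitutions all these valuations may be assumed to lie in $\{0,\dots,d-1\}$. Two remarks will be used repeatedly: the move of Step 1 carries a good system of degree $d_0$ to a good system of degree $p^\ell d_0$ (the new leading valuations $v(b_i)+a\,d_0$, $0\le a<p^\ell$, remain pairwise distinct and lie in $\{0,\dots,p^\ell d_0-1\}$), so \emph{goodness is preserved when the common degree is raised}; and any polynomial of degree $<d$ can be raised to degree $d$ the same way.

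\emph{Step 3 (remove collisions).} Here I would induct on $r$. For $r=1$, Steps 1--2 already give a good system (a single polynomial has no collisions). For $r\ge 2$, the hypothesis that $P_{\princ}$ does not vanish on $k^r\setminus\{0\}$ passes to $f_1+\cdots+f_{r-1}$ (pad an isotropic vector by a final zero), so by induction $f_1(k)+\cdots+f_{r-1}(k)$ is the sum of the images of a good system $\{h_1,\dots,h_m\}$; raise that system and expand $f_r(k)$, as in Steps 1--2, to a common degree $d$. The pieces of $f_r$ are mutually non-colliding (their leading valuations $v(c_r)+a\,p^{m_r}$, $0\le a<p^{\nu-m_r}$, are pairwise incongruent $\bmod\,d$), so every remaining collision is between a piece $\phi$ of $f_r$ and some $h_j$ with $v(\mathrm{lead}\,\phi)=v(\mathrm{lead}\,h_j)=e$. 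Resolve it by replacing $\phi(X)$ with $\phi(X)-h_j(wX)$ for a suitable $w\in\O^\times$: this leaves the total image unchanged, because $\phi(X)+h_j(Y)=\bigl(\phi(X)-h_j(wX)\bigr)+h_j(wX+Y)$ and $(X,Y)\mapsto(X,wX+Y)$ is a bijection of $k^2$, and it leaves every other polynomial untouched. Since $\gcd(d,q-1)=1$ every element of $\F_q^\times$ is a $d$th power in $\F_q$, while a $1$-unit $1+a_1t+a_2t^2+\cdots$ is a $d$th power in $\O^\times$ exactly when $a_j=0$ for every $j$ not divisible by $d$; so choosing $w$ to cancel $\mathrm{lead}\,\phi$ against $\mathrm{lead}(h_j)\,w^d$ to the largest possible order either (i) raises $\phi$'s leading valuation above $e$ to a value incongruent to $e\bmod d$ --- the case $\mathrm{lead}\,\phi/\mathrm{lead}\,h_j\notin(\O^\times)^d$ --- so that $\phi$ permanently leaves the position $e$, or (ii) kills $\phi$'s $X^d$-coefficient altogether, dropping its degree below $d$, in which case $\phi$ is set aside as a lower-degree remainder.

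\emph{The main obstacle} is to turn Step 3 into a genuinely terminating procedure. The type-(ii) remainders are cleaned up by running the same induction at a smaller common degree and then raising the resulting good system back to degree $d$ (legitimate by the remark in Step 2), and one must check that re-inserting cleaned-up remainders does not trigger an endless cascade of collisions. The natural control is a lexicographic monovariant built from the numbers of surviving polynomials of degrees $p^\nu,p^{\nu-1},\dots$ together with the collision pattern at the current top degree --- case (ii) strictly lowers the top count --- and the delicate point is precisely that only finitely many type-(i) moves intervene between successive type-(ii) events. Making this quantitative, and checking along the way that the anisotropy hypothesis keeps the principal parts from degenerating so that no degree is ever lost prematurely, is where the real work lies; the rest is the formal manipulation above.
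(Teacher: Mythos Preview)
The paper does not prove this lemma; it is quoted from [DK], and the only additional information the paper supplies is the Remark that one may take $d=\max_i p^{m_i}$ and $s=\sum_i d\cdot p^{-m_i}$. Your Steps~1 and~2 reproduce exactly this: the substitution $f_i(t^aX^{p^{\nu-m_i}})$ produces $p^{\nu-m_i}$ polynomials of degree $p^\nu$ from each $f_i$, so the total count is $\sum_i p^{\nu-m_i}=\sum_i d\cdot p^{-m_i}$, matching the Remark. So far so good.

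The gap is in Step~3, and you have essentially flagged it yourself. What you are missing is that the anisotropy hypothesis does far more work than you use. After Step~1 the new principal part is $\sum_{i,a} c_i t^{ap^{m_i}} Y_{i,a}^{p^\nu}$, and a nontrivial zero $(y_{i,a})$ would give a nontrivial zero $x_i=\sum_a t^a y_{i,a}^{p^{\nu-m_i}}$ of the original $P_{\princ}$ (here one uses that $1,t,\dots,t^{p^{\nu-m_i}-1}$ are linearly independent over $k^{p^{\nu-m_i}}$, which holds because $[k:k^p]=p$). Hence the $s$ leading coefficients $b_{i,a}=c_i t^{ap^{m_i}}$ are \emph{linearly independent over $k^{p^\nu}$}. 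Two consequences follow. First, your case~(ii) never occurs: every move $\phi\mapsto\phi-h_j(wX)$ replaces the leading coefficient by a nontrivial $k^{p^\nu}$-linear combination of the original $b_{i,a}$, which cannot vanish. Second, and more to the point, you can bypass the iterative procedure entirely: since $k^{p^\nu}$ is complete with value group $d\Z$, the $k^{p^\nu}$-span of the $b_{i,a}$ admits a valuation basis $b'_1,\dots,b'_s$, and any valuation basis necessarily has valuations pairwise distinct modulo $d$. Writing $b'_j=\sum_{i,a}\alpha_{i,a}^{(j)} b_{i,a}$ with $\alpha_{i,a}^{(j)}\in k^{p^\nu}$ and setting $g'_j(X)=\sum_{i,a} g_{i,a}\bigl((\alpha_{i,a}^{(j)})^{1/p^\nu}X\bigr)$ gives additive polynomials of degree $d$ with leading coefficients $b'_j$, and the invertibility of $(\alpha_{i,a}^{(j)})$ over $k^{p^\nu}$ (hence of its $p^\nu$-th root over $k$) shows $\sum_j g'_j(k)=S$. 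A final normalisation as in your Step~2 puts the valuations into $\{0,\dots,d-1\}$. This replaces your open-ended collision-resolution loop, whose termination you never establish, by a single linear change of variables.
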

\begin{remark}
As in the proof of Lemma \ref{lemma:DK}, we may and we shall 
choose $d=\max\limits_i p^{m_i}$ 
and $s=\sum\limits_{i=1}^rd\cdot p^{-m_i},$ where $p^{m_i}=\deg f_i$.
\end{remark}

 \begin{lemma} 
\label{lemma:Rosenlicht type}
Let $k$ be a local function field of characteristic $p>0$. Let $P(T_1,\cdots, T_{r})$ be a separable $p$-polynomial in $r$ variables with coefficients in $k$ such that its principal part 
 \[ P_{\princ}=\sum_{i=1}^{r_1} c_i T_i^{p^M}+\sum_{i=r_1+1}^{r_1+r_2} c_i T_i^{p^{M-1}}+\cdots+\sum_{i=r_1+\cdots+r_{M-1}+1}^{r_1+\cdots+r_M} c_i T_i^{p} \]
 vanishes nowhere over $k^{r}\setminus \{0\}$. Then we  have the following
 \[ r_1+ pr_2+\cdots+p^{M-1}r_M\leq p^M, \]
 and the equality holds if and only if the quotient group $k/\im P$ is finite.
 \end{lemma}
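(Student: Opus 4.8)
The plan is to translate the several-variable polynomial $P$ into one-variable polynomials via Lemma 3 and to read off both assertions from its conclusion. Since $P$ is additive we may write $P(T_1,\dots,T_r)=f_1(T_1)+\cdots+f_r(T_r)$ with each $f_i\in k[X]$ a $p$-polynomial in one variable, and from the shape of $P_{\princ}$ the degree $p^{m_i}$ of $f_i$ equals $p^M$ for $r_1$ of the indices, $p^{M-1}$ for the next $r_2$, and so on down to $p$ for the last $r_M$. (We may assume $r_1\ge1$; otherwise replace $M$ by the largest $j$ with $r_j\ne0$, which alters neither $\im P$ nor the quantities in the statement.) The hypothesis that $P_{\princ}$ vanishes nowhere on $k^r\setminus\{0\}$ is precisely what lets us apply Lemma 3 and the Remark following it: we obtain $\im P=g_1(k)+\cdots+g_s(k)$ with all $g_j$ of the same degree $d=\max_i p^{m_i}=p^M$, with $s=\sum_i d\,p^{-m_i}=r_1+pr_2+p^2r_3+\cdots+p^{M-1}r_M$, and with leading coefficients $b_1,\dots,b_s$ such that $v(b_1),\dots,v(b_s)$ are pairwise distinct elements of $\{0,1,\dots,p^M-1\}$. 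The inequality is then immediate, since $s$ is a number of distinct elements of a set of size $p^M$.

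Next I treat the equality case $s=p^M$. Then $\{v(b_1),\dots,v(b_s)\}$ is all of $\{0,1,\dots,p^M-1\}$, so the $v(b_j)$ are pairwise distinct modulo $p^M$; as there are $p^M=[k:k^{p^M}]$ of them, and elements with pairwise distinct valuations mod $p^M$ are automatically $k^{p^M}$-linearly independent, $b_1,\dots,b_s$ is a $k^{p^M}$-valuation basis of $k$. I would like to apply Lemma 2 to $G(X_1,\dots,X_s):=g_1(X_1)+\cdots+g_s(X_s)$, whose principal part is $\sum_j b_jX_j^{p^M}$ and whose image is $\im P$. For this $G$ must be separable, which need not be automatic; but after a preliminary linear change of variables on $P$ moving a linear term of $P$ onto a variable of top degree $p^M$ (possible because $r_1\ge1$ and $P$ is separable, and harmless because it disturbs neither $P_{\princ}$, nor the $r_i$, nor $\im P$), the $g_j$ coming from that $f_i$ in the construction of Lemma 3 equals $f_i$ itself (since $\deg f_i=p^M=d$) and hence carries a linear term, so $G$ is separable. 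Lemma 2 then yields that $k/\im G=k/\im P$ is finite.

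It remains to show that $s<p^M$ forces $k/\im P$ to be infinite. Fix $\rho\in\{0,1,\dots,p^M-1\}$ not congruent to any $v(b_j)$ modulo $p^M$. The key observation is a rigidity statement: if $v(\xi)$ is negative enough then the top term of $g_j(\xi)$ dominates, so $v(g_j(\xi))=v(b_j)+p^Mv(\xi)\equiv v(b_j)\pmod{p^M}$; consequently, for $y=\sum_j g_j(\xi_j)\in\im P$ with $v(y)$ sufficiently negative, the minimum of the $v(g_j(\xi_j))$ cannot be attained at two different indices (that would force $v(b_j)\equiv v(b_{j'})\bmod p^M$), so it is attained exactly once and $v(y)\equiv v(b_{j^*})\pmod{p^M}$ for that index. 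Hence there is a constant $\theta$, depending only on $P$, such that no element of $\im P$ of valuation $<\theta$ has valuation $\equiv\rho\pmod{p^M}$. Therefore the elements $z_n:=t^{\rho-np^M}$ with $n$ large pairwise differ by elements of valuation $<\theta$ and $\equiv\rho\pmod{p^M}$, hence lie in pairwise distinct cosets of $\im P$, and $k/\im P$ is infinite.

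I expect the last paragraph to be the main obstacle: one must make the rigidity statement uniform, i.e. exhibit an explicit threshold $\theta$ below which $v(g_j(\xi))$ is controlled by $v(b_j)+p^Mv(\xi)$ for all $j$ simultaneously, by comparing $v(b_j)+p^Mv(\xi)$ with the valuations of the lower-degree terms of $g_j$. (The separability bookkeeping in the equality case is a second, minor, technical point; one should also note separately that when no $g_j$ has lower-degree terms at all, the case $s<p^M$ is handled at once by a codimension count on the $k^{p^M}$-subspace $\sum_j b_jk^{p^M}$ of $k$.)
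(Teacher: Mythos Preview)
Your argument follows the paper's proof closely: both decompose $P$ via Lemma~3 into $Q=g_1+\cdots+g_s$ with $d=p^M$ and $s=r_1+pr_2+\cdots+p^{M-1}r_M$, read off the inequality from condition~(c), and invoke Lemma~2 when $s=p^M$. The one substantive difference is the strict case $s<p^M$: the paper simply cites [TT, Proposition~4.5], whereas you supply a direct valuation argument. That argument is correct and can be made fully rigorous along the lines you indicate---for each $g_j$, if $v(g_j(\xi))$ is below an explicit bound determined by the valuations of the lower coefficients relative to $v(b_j)$, then necessarily $v(g_j(\xi))=v(b_j)+p^Mv(\xi)$; taking $\theta$ to be the minimum of these bounds over $j$ gives the uniform threshold, and then distinctness of the $v(b_j)$ mod $p^M$ forces the minimum in $\sum_j g_j(\xi_j)$ to be attained uniquely whenever it lies below $\theta$. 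Your separability bookkeeping for $Q$ is also sound: the shear $T_{i_0}\mapsto T_{i_0}+cT_{j_0}$ (with $\deg f_{j_0}=p^M$, $f_{i_0}$ carrying a linear term) leaves every $\deg f_i$ and every leading coefficient, hence $P_{\princ}$ and the $r_i$, untouched while giving $f_{j_0}$ a linear term; the paper applies Lemma~2 without commenting on this point, so your treatment is in fact the more careful one.
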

 
\begin{proof}
We write $$P=f_1(T_1)+\cdots+f_{r}(T_r),$$ 
where each $f_i$ is a $p$-polynomial in one variable $T_i$ with coefficients in $k$ and of degree $p^{m_i}$. Let 
\[ S=\im (P)= f_1(k)+\cdots+f_r(k).\]
Choose $g_1,\ldots,g_s$ with leading coefficients $b_1,\ldots,b_s$, with $d:= \max\limits_i p^{m_i}$, $s:=\sum\limits_{i=1}^rd\cdot p^{-m_i}$  as in Lemma~\ref{lemma:DK}. Then 
\[ d=p^M \text{ and } s=r_1+ pr_2+\cdots+p^{M-1}r_M.\]
Let 
\[ Q(T_1,\ldots,T_s)=g_1(T_1)+\cdots+g_s(T_s).\]
Since $v(b_1),\ldots, v(b_s)$ are distinct elements of $\{0,1,\ldots,d-1\}$, the principal part of $Q$ vanishes nowhere over $k^s\setminus\{0\}$ and $b_1,\ldots,b_s$ is a $k^{p^M}$-linearly independent subset of $k$. Hence
\[ s=r_1+ pr_2+\cdots+p^{M-1}r_M \leq p^M.\]

Assume that $s<p^M$. Then by \cite[Proposition 4.5]{TT}, $k/\im P=k/\im Q$ is infinite. Now assume that $s=p^M$. Then by Lemma \ref{lemma:key}, $k/\im P$ is finite. Therefore $k/\im P$ is finite if and only if $s=p^M$.
\end{proof}

\begin{remark}
With notations as in Lemma~\ref{lemma:Rosenlicht type}, one has $r_1+\cdots+r_M=r$.
\end{remark}

Lemma~\ref{lemma:Rosenlicht type} motivates the following definition. 
 \begin{definition}
\label{def:Rosenlicht type}
 Let $k$ be a non-perfect field of characteristic $p>0$, $P(T_1,\ldots,T_{d+1})$ a separable $p$-polynomial with coefficients in $k$. The polynomial $P$ is called of {\it Rosenlicht type} if its principal part (after reindexing the variables)
 \[ P_{\princ}=\sum_{i=1}^{r_1} c_i T_i^{p^M}+\sum_{i=r_1+1}^{r_1+r_2} c_i T_i^{p^{M-1}}+\cdots+\sum_{i=r_1+\cdots+r_{M-1}+1}^{r_1+\cdots+r_M} c_i T_i^{p} \]
 vanishes nowhere on $k^{d+1}\setminus \{0\}$ and 
  \begin{equation*}
 \begin{cases} 
r_1+r_2+\cdots +r_M=d+1,\\
r_1+ pr_2+\cdots+p^{M-1}r_M= p^M,\\
r_1\geq 1, r_i\geq 0, i>1.
\end{cases}
\end{equation*}
A unipotent $k$-group $G$ is called of {\it Rosenlicht type} if $G$ is $k$-isomorphic to a $k$-subgroup of $\G_a^{d+1}$, where $d=\dim G$, which is defined as the kernel of a separable $p$-polynomial $P(T_1,\ldots,T_{d+1})\in k[T_1,\ldots,T_{d+1}]$ of Rosenlicht type.  
\end{definition}

\begin{corollary} 
\label{cor:Rosenlicht type}
Let $k$ be a local field of characteristic $p>0$, $G$ be a commutative unipotent $k$-wound $k$-group, which is killed by $p$. Then $\H^1(k,G)$ is finite if and only if $G$ is of Rosenlicht type. 
\end{corollary}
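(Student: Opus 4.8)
The plan is to reduce the statement to Lemma 4 by giving $G$ an explicit embedding into $\G_a^{d+1}$ as the kernel of a separable $p$-polynomial whose principal part vanishes nowhere on $k^{d+1}\setminus\{0\}$, and then to identify $\H^1(k,G)$ with a cokernel of the induced additive polynomial map on $k$-points. First I would recall the standard structure theory: since $G$ is a commutative unipotent $k$-group of exponent $p$ and dimension $d$, a theorem of Rosenlicht (see [KMT] or [Oe]) says that $G$ is $k$-isomorphic to a $k$-subgroup of $\G_a^{d+1}$ defined as the zero locus of a single separable $p$-polynomial $P(T_1,\dots,T_{d+1})$ in $d+1$ variables; moreover $G$ being $k$-wound is equivalent to the principal part $P_{\princ}$ having no nontrivial zero on $k^{d+1}\setminus\{0\}$ (this is the Tits criterion for woundness, again in [KMT]). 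So after this reduction the two remaining numerical conditions defining "Rosenlicht type," namely $r_1+\dots+r_M=d+1$ and $r_1+pr_2+\dots+p^{M-1}r_M=p^M$, are exactly what Lemma 4 governs: the first is automatic from $\dim G = d$ together with the shape of $P_{\princ}$ in $d+1$ variables, and the second, by Lemma 4, is equivalent to finiteness of $k/\im P$.

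Next I would connect $\H^1(k,G)$ to $k/\im P$. From the exact sequence of $k$-groups
\begin{equation*}
0\longrightarrow G\longrightarrow \G_a^{d+1}\stackrel{P}{\longrightarrow}\G_a\longrightarrow 0,
\end{equation*}
which is exact as a sequence of fppf (indeed, since $P$ is separable, étale-locally surjective) sheaves, the long exact cohomology sequence together with $\H^1(k,\G_a^{d+1})=0$ and $\H^1(k,\G_a)=0$ (additive Hilbert 90) gives a canonical isomorphism
\begin{equation*}
\H^1(k,G)\;\cong\;\operatorname{coker}\big(P:k^{d+1}\to k\big)\;=\;k/\im P .
\end{equation*}
Here one has to be slightly careful that the relevant surjectivity is of Galois (equivalently étale) sheaves rather than on $k$-points, which is why separability of $P$ is used; this is routine. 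Combining the two displays, $\H^1(k,G)$ is finite if and only if $k/\im P$ is finite, which by Lemma 4 happens if and only if $r_1+pr_2+\dots+p^{M-1}r_M=p^M$, i.e. if and only if $P$ is of Rosenlicht type, i.e. if and only if $G$ is of Rosenlicht type.

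The main obstacle, or rather the point requiring the most care, is the first step: producing the embedding $G\hookrightarrow \G_a^{d+1}$ as the kernel of a \emph{single} separable $p$-polynomial in exactly $d+1$ variables and checking that the $k$-wound hypothesis translates precisely into "$P_{\princ}$ vanishes nowhere on $k^{d+1}\setminus\{0\}$." For this I would invoke Rosenlicht's result on the structure of commutative unipotent groups of exponent $p$ as stated in [KMT] (every such $d$-dimensional group is the kernel of one separable $p$-polynomial on $\G_a^{d+1}$), and Tits' woundness criterion, also in [KMT], which characterizes $k$-woundness via the principal part having no nontrivial $k$-zero; one should note that since $G$ is commutative of exponent $p$ every $k$-morphism $\G_a\to G$ that one needs to rule out is already a homomorphism up to translation, so the two notions of "wound" in the excerpt's definition coincide here. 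Once these two inputs are in place, the corollary is immediate from Lemma 4 and the cohomology computation above, with the number $r_1$ being automatically $\geq 1$ because $P$ is separable (the linear part is nonzero) and the remaining $r_i\geq 0$ trivially.
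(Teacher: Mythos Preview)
Your argument is correct and is exactly the route the paper intends: the paper states that this corollary (and the next) ``are derived immediately from Lemma 4,'' and the ingredients you spell out---the Rosenlicht--Tits embedding of a commutative, $k$-wound, exponent-$p$ group as the kernel of a separable $p$-polynomial whose principal part has no nontrivial $k$-zero, together with the identification $\H^1(k,G)\cong k/\im P$ via the exact sequence and additive Hilbert 90---are precisely what the paper uses elsewhere (see the proof of Proposition 9). One small slip: your justification ``$r_1\geq 1$ because $P$ is separable'' is not the right reason; $r_1$ counts variables of \emph{maximal} degree $p^M$, so $r_1\geq 1$ is automatic from the definition of $M$, while separability concerns the linear part of $P$, not $P_{\princ}$.
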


\begin{proof} Let $\dim G=d$. By Tits theory on unipotent groups, $G$ is $k$-isomorphic to a subgroup of $\G_a^{d+1}$, which is the zero set  of  a separable $p$-polynomial $P$ in  ${d+1}$ variables with coefficients in $k$, the principal part of which vanishes nowhere over $k^{d+1}\setminus \{0\}$ (see \cite[III, 3.3.6]{Ti}, \cite[V, 6.3, Proposition]{Oe} or \cite[Proposition B.1.13]{CGP}). From the exact sequence of $k$-groups 
 \[ 0\to G\to \G_a^{d+1} \stackrel{P}{\to} \G_a\to 0,\]
one has $\H^1(k,G)=k/\im P$. By the very definition of group of Rosenlicht type and Lemma~\ref{lemma:Rosenlicht type}, $\H^1(k,G)$ is finite if and only if $G$ is of Rosenlicht type. 
\end{proof}
 Over local field of characteristic $p>0$, the property of a wound unipotent group being Rosenlicht type   does not depend on the choice of its defining $p$-polynomials. More precisely, one has

\begin{corollary}
\label{cor:independent}
Let $k$ be a local field of characteristic $p>0$ and $G$ a $k$-unipotent group of Rosenlicht type of dimension $d$. Assume that $G$ is $k$-isomorphic to a $k$-subgroup of $\G_a^{d+1}$, which is  defined as the kernel of a separable $p$-polynomial $P(T_1,\ldots,T_{d+1})\in k[T_1,\ldots,T_{d+1}]$ whose principal part vanishes nowhere over $k^{d+1}\setminus\{0\}$. Then the polynomial $P$ is of Rosenlicht type.  
\end{corollary}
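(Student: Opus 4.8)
The plan is to pass, via Galois cohomology, from the group $G$ to the cokernel $k/\im P$ of the map induced on $k$-points, and then to read the conclusion off from the equality case of Lemma 4.

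The mechanism is the following elementary remark, valid for \emph{any} separable $p$-polynomial $Q=Q(T_1,\dots,T_{d+1})$ over $k$: putting $H:=\ker(Q\colon\G_a^{d+1}\to\G_a)$, the morphism $Q$ is surjective (its image is a nonzero subgroup of $\G_a$, hence all of $\G_a$) and smooth (as $Q$ is separable), so $0\to H\to\G_a^{d+1}\xrightarrow{\,Q\,}\G_a\to 0$ is a short exact sequence of commutative $k$-groups with $\dim H=d$, and since $\H^1(k,\G_a)=0$ the long exact cohomology sequence yields a canonical isomorphism $\H^1(k,H)\cong k/\im Q$. In particular the finiteness of $\H^1(k,H)$ is equivalent to that of $k/\im Q$, for every presentation of $H$ of this kind; so finiteness of $\H^1$ is an invariant of the abstract group and can be tested on any convenient presentation.

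Given this, the proof is short. Since $G$ is of Rosenlicht type, it is $k$-isomorphic to $\ker(P'\colon\G_a^{d+1}\to\G_a)$ for some separable $p$-polynomial $P'$ of Rosenlicht type; $P'$ has principal part vanishing nowhere on $k^{d+1}\setminus\{0\}$ and satisfies $r_1+pr_2+\cdots+p^{M-1}r_M=p^M$, so the equality case of Lemma 4 gives that $k/\im P'$ is finite, whence $\H^1(k,G)\cong k/\im P'$ is finite. (Alternatively one may quote Corollary 6, $G$ being commutative of exponent $p$ and $k$-wound.) Transporting this through the \emph{given} presentation $G\cong\ker(P\colon\G_a^{d+1}\to\G_a)$, the remark gives $k/\im P\cong\H^1(k,G)$ finite; since $P$ is separable with principal part vanishing nowhere on $k^{d+1}\setminus\{0\}$, Lemma 4 applies to $P$ and its equality case forces $r_1+pr_2+\cdots+p^{M-1}r_M=p^M$. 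Finally, to see that $P$ meets Definition 5: $r_i\ge 0$ is trivial; $r_1\ge 1$ because $M=\max_i m_i$ by definition of the principal part; and $r_1+\cdots+r_M=d+1$ because each variable $T_i$ must actually occur in $P$ (if $T_i$ were absent, $P_{\princ}$ would vanish at the $i$-th standard basis vector of $k^{d+1}$, contradicting the hypothesis). Hence $P$ is of Rosenlicht type.

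The only delicate point is the presentation-independence in the remark above: it is what licenses importing the finiteness of $\H^1(k,G)$ from the Rosenlicht-type presentation $P'$ into the arbitrary presentation $P$. Once that is granted, the corollary is simply the ``only if'' half of Lemma 4 read through the dictionary $\H^1(k,G)\cong k/\im P$, plus the routine bookkeeping of the numerical conditions in Definition 5. I do not anticipate any genuine obstacle.
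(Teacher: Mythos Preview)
Your proposal is correct and follows the route the paper has in mind: the corollary is stated as ``derived immediately from Lemma 4,'' and what you have written is precisely the natural unpacking of that claim via the identification $\H^1(k,G)\cong k/\im P$ together with the equality case of Lemma 4. The only extra content you supply beyond the paper is the explicit verification of the remaining clauses of Definition 5, which is routine and correctly argued.
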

\begin{proof}
This follows immediately from Lemma~\ref{lemma:Rosenlicht type} and the proof of Corollary~\ref{cor:Rosenlicht type}.
\end{proof}

 \section{Main theorem}
 We first recall some basic results of the theory of unipotent groups over an arbitrary field (see \cite{Ti}, \cite[Chapter V]{Oe} and \cite[Appendix B]{CGP}). A unipotent algebraic group $G$ over a field $k$ of characteristic $p>0$ is called $k$-{\it split} if it admits a composition series by $k$-subgroups with successive quotients $k$-isomorphic to $\G_a$. We say that $G$ is $k$-{\it wound} if any $k$-morphism of affine groups $\G_a\to G$ is constant. For any unipotent group $G$ defined over $k$, there is a maximal $k$-split $k$-subgroup $G_s$, which enjoys the following properties: it is normal in $G$, the quotient $G/G_s$ is $k$-wound and the formation of $G_s$ commutes with separable (not necessarily algebraic) extensions, see \cite[Chapter V, 7]{Oe} and \cite[Appendix B, B.3]{CGP}. The group $G_s$ is called the $k$-{\it split part} of $G$.
 
Let $G$ be a unipotent group over $k$. Then there exists a maximal central connected $k$-subgroup of $G$ which is annihilated by $p$. This group is called {\it cckp-kernel} of $G$ and denoted by $cckp(G)$ or $\kappa(G)$. Here $\dim (\kappa(G))>0$ if $G$ is not finite. The formation of $\kappa(G)$ commutes with any separable extension on $k$, see \cite[Appendix B, B.3]{CGP}. 

The following statements are equivalent:
\begin{enumerate}
\item[(i)] $G$ is wound over $k$,
\item[(ii)] $\kappa(G)$ is wound over $k$.
\end{enumerate}

If these conditions are satisfied then $G/\kappa(G)$ is also wound over $k$ (see \cite[Chapter V, 3.2]{Oe}, \cite[Appendix B, B.3]{CGP}).

We set
\[
\begin{aligned}
\kappa^1(G)&:=\kappa(G), \kappa_1(G):=G/\kappa^1(G)\\
\kappa^2(G)&:=\kappa(\kappa_1(G)), \kappa_2(G):=\kappa_1(G)/\kappa^2(G)\\
\ldots \\
\kappa^{n+1}(G)&:=\kappa(\kappa_n(G)), \kappa_{n+1}(G):=\kappa_n(G)/\kappa^{n+1}(G)\\
\end{aligned}
\] 
We call $\kappa^{1}(G), \kappa^{2}(G), \ldots, \kappa^{n}(G),\ldots $ the {\it cckp-kernel series} of $G$. Then there exists $n$ such that $\kappa^{n}(G)=0$, and the least such number will be called the {\it cckp-kernel length} of $G$ and denoted by $lcckp(G)$. The formation of $\kappa^i(G)$ commutes with any separable extension on $k$.

We now recall the following result of Oesterl\'e (see \cite[Chapter IV, 2.2]{Oe}).
\begin{lemma} 
\label{lemma:surjective}
Let $G$ be a linear algebraic group defined over a field $k$ and $U$ a normal unipotent algebraic subgroups of $G$ defined over $k$. Then the canonical map 
\[ \H^1(k,G)\to \H^1(k,G/U) \]
is surjective.
\end{lemma}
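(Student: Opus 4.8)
The plan is to reduce, by dévissage, to the case of a commutative kernel, to treat that case by the twisting technique for exact sequences with abelian kernel, and to conclude from the vanishing of the second Galois cohomology of commutative unipotent groups. First I would reduce to $U$ commutative. Since $G$ normalizes $U$, conjugation makes $G$ act on $U$ through $\underline{\mathrm{Aut}}(U)$, so every characteristic $k$-subgroup of $U$ is again normal in $G$. A unipotent group is nilpotent, hence solvable, so if a connected $U$ is not commutative then $U':=[U,U]$ is a proper smooth connected $k$-subgroup, normal in $G$, with $U/U'$ commutative. As $\H^1(k,G)\to\H^1(k,G/U')\to\H^1\!\big(k,(G/U')/(U/U')\big)=\H^1(k,G/U)$ and a composite of surjections of pointed sets is surjective, an induction on $\dim U$ (splitting off first the identity component $U^0$, which is characteristic in $U$) reduces to $U$ commutative; since Galois cohomology only sees $k_s$-points, I may also take $U$ smooth and connected.

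Now let $U$ be commutative and normal in $G$, put $Q:=G/U$ (so $Q$ acts on $U$ by conjugation), and let $\xi\in\H^1(k,Q)$ with a representing cocycle $c$. Twisting the Galois action on $U(k_s)$ by $c$ yields a $k$-form ${}_cU$ of $U$, again a commutative unipotent $k$-group. By the standard analysis of $1\to U\to G\to Q\to 1$ with abelian kernel — the twisting argument, see [Se, Chapter I] — there is an obstruction class $\partial(\xi)\in\H^2(k,{}_cU)$ such that $\xi$ lifts to $\H^1(k,G)$ if and only if $\partial(\xi)=0$. Hence it suffices to prove that $\H^2(k,V)=0$ for every commutative unipotent $k$-group $V$.

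For this vanishing I would argue as follows. Using the exact sequence $0\to pV\to V\to V/pV\to 0$ and induction on the exponent of $V$, one reduces to the case where $V$ is killed by $p$; such a $V$ is $k$-isomorphic to a $k$-subgroup of a vector group $\G_a^N$ (see [Ti]). By the normal basis theorem, $\H^i(\Gal(L/k),L)=0$ for every $i\ge 1$ and every finite Galois extension $L/k$, whence $\H^i(k,\G_a)=0$ for $i\ge 1$ and, by dévissage along a filtration by copies of $\G_a$, $\H^i(k,W_0)=0$ for $i\ge 1$ for every commutative $k$-split unipotent group $W_0$. Applying the long exact cohomology sequence to $0\to V\to\G_a^N\to W\to 0$ and using $\H^1(k,\G_a^N)=\H^2(k,\G_a^N)=0$, one gets $\H^2(k,V)\cong\H^1(k,W)$; and $W=\G_a^N/V$, being a quotient of a $k$-split unipotent group, is itself $k$-split (see [Ti]), and commutative, so $\H^1(k,W)=0$ and therefore $\H^2(k,V)=0$.

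The step I expect to be the real obstacle is this last one. Over an imperfect field of characteristic $p$ the groups $V$ and $W$ may be $k$-wound and carry nonzero $\H^1$, so one genuinely needs the two structural inputs used above — that a commutative unipotent group killed by $p$ embeds into a vector group, and that a quotient of a $k$-split unipotent group is $k$-split; by contrast the dévissage and the twisted cohomology exact sequence are purely formal. In practice, of course, the vanishing $\H^2(k,V)=0$, equivalently the lemma itself, may simply be quoted from [Oe, Chap. IV].
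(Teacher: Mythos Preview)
The paper does not give its own proof of this lemma: it simply records the statement with the citation to Oesterl\'e [Oe, Chap.~IV, 2.2], exactly as you anticipate in your closing sentence. Your argument---d\'evissage to a commutative kernel, the twisted obstruction class in $\H^2(k,{}_cU)$, and the vanishing of $\H^2$ for commutative unipotent groups---is the standard one and is correct; it is essentially what one finds in Oesterl\'e.

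One small simplification worth noting: when $V$ is smooth, connected, commutative and killed by $p$, the Tits embedding already presents $V$ as the kernel of a separable $p$-polynomial $\G_a^{d+1}\to\G_a$ (this is in fact how the paper itself uses it, e.g.\ in the proof of Proposition~9), so the cokernel $W$ is literally $\G_a$ and $\H^1(k,W)=0$ is immediate. You therefore do not need the heavier statement that arbitrary quotients of $k$-split unipotent groups are $k$-split. Your handling of the passage from general $U$ to connected $U$ is a bit telegraphic---the base case $\dim U=0$ (finite \'etale $p$-groups) still has to be fed through the same $\H^2$-vanishing---but the embedding-into-a-vector-group argument works there as well, so there is no genuine gap.
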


\begin{proposition}
\label{prop:finite}
 Let $G$  be a smooth connected unipotent group which is defined and wound over a local function field $k$ of characteristics $p>0$. Let $\kappa^1(G), \kappa^2(G), \ldots, \kappa^{n}(G)=0$, $n=lcckp(G)$, be its cckp-kernel series. Then $\H^1(k,G)$ is finite if and only if $\H^1(k, \kappa^i(G))$ is finite for all $i$.
 \end{proposition}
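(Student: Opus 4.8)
The plan is to argue by induction on $n=lcckp(G)$, peeling off the first term $\kappa^1(G)$ of the cckp-kernel series. If $n\le 1$ then $G$ is trivial (a non-trivial smooth connected unipotent group has $\dim\kappa(G)>0$) and there is nothing to prove, so I assume $n\ge 2$ and that the statement is known for all smooth connected unipotent $k$-wound groups of smaller cckp-kernel length. Put $N:=\kappa^1(G)=\kappa(G)$ and $Q:=\kappa_1(G)=G/N$, giving a short exact sequence $1\to N\to G\to Q\to 1$ in which $N$ is central in $G$, commutative, killed by $p$ and $k$-wound, while $Q$ is smooth, connected, unipotent and $k$-wound. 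Moreover $Q$ has cckp-kernel series $\kappa^2(G),\ldots,\kappa^n(G)$ with $lcckp(Q)=n-1$, so the induction hypothesis applies to it: $\H^1(k,Q)$ is finite if and only if $\H^1(k,\kappa^i(G))$ is finite for every $i\ge 2$.

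Next I would exploit the exact sequence of pointed sets
\begin{equation*}
G(k)\longrightarrow Q(k)\xrightarrow{\ \delta_0\ }\H^1(k,N)\xrightarrow{\ \iota_*\ }\H^1(k,G)\xrightarrow{\ \pi_*\ }\H^1(k,Q),
\end{equation*}
where $\pi_*$ is surjective by Lemma 8. Centrality of $N$ in $G$ is what makes this tractable: it forces $\delta_0$ to be a group homomorphism and the fibres of $\iota_*$ to be exactly the cosets of the subgroup $\im\delta_0\subseteq\H^1(k,N)$; and, since conjugation of $G$ on the central subgroup $N$ is trivial, twisting $G$ by any cocycle leaves $N$ unchanged, so the standard twisting argument shows that every non-empty fibre of $\pi_*$ has cardinality at most $|\H^1(k,N)|$, whence $|\H^1(k,G)|\le|\H^1(k,Q)|\cdot|\H^1(k,N)|$.

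The one non-formal step, and the place I expect the real work to be, is to prove that $\im\delta_0$ is finite; equivalently, that $Q(k)/\phi(G(k))$ is finite, where $\phi\colon G(k)\to Q(k)$ is the natural map. Here I would combine two inputs: since $Q$ is $k$-wound and $k$ is local, $Q(k)$ is compact (a standard property of wound unipotent groups over local fields, see [Oe]); and since $G\to Q$ is a smooth surjective morphism, $\phi$ is an open map. Hence $\phi(G(k))$ is an open — and therefore finite-index — subgroup of the compact group $Q(k)$, so $\im\delta_0$ is finite and $\iota_*$ has finite fibres.

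Granting this, the two implications are immediate. If $\H^1(k,\kappa^i(G))$ is finite for all $i$, then $\H^1(k,N)$ is finite and, by the induction hypothesis, so is $\H^1(k,Q)$; hence $\H^1(k,G)$ is finite by the displayed inequality. Conversely, if $\H^1(k,G)$ is finite then $\H^1(k,Q)$ is finite because $\pi_*$ is surjective, so $\H^1(k,\kappa^i(G))$ is finite for all $i\ge 2$ by the induction hypothesis, while $\H^1(k,N)=\H^1(k,\kappa^1(G))$ is finite because $\iota_*$ maps it with finite fibres into the finite set $\H^1(k,G)$. The crux is thus entirely the compactness/openness estimate for $\im\delta_0$; everything else is the long exact Galois-cohomology sequence, Lemma 8, and the recursive structure of the cckp-kernel series.
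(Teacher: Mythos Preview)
Your argument is correct and follows essentially the same route as the paper: both proofs use the exact sequence for $1\to\kappa^1(G)\to G\to\kappa_1(G)\to 1$, surjectivity of $\pi_*$ via Lemma~8, compactness of $\kappa_1(G)(k)$ from $k$-woundness, and an implicit-function-theorem openness statement to force $\im\delta_0$ finite, then twist. The only cosmetic difference is where the openness is applied---you use that the smooth surjection $G\to Q$ induces an open map on $k$-points, while the paper uses the $p$-polynomial presentation of $N$ to see directly that $\H^1(k,N)\simeq k/F(k^{d+1})$ is discrete; these are equivalent applications of the same principle.
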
 
 \begin{proof}
Suppose that $\H^1(k,G)$ is finite. From the exact sequence
 \[1\to \kappa^1(G)\to G \to \kappa_1(G)\to 1,\]
 we derive the  following exact sequence
 \[\kappa_1(G)(k)\stackrel{\delta}{\to} \H^1(k,\kappa^1(G))\to \H^1(k,G) \to \H^1(k,\kappa_1(G)).\]
We endow $(\kappa_1(G))(k)$ and $\H^1(k,\kappa^1(G))$ with the topology induced from the natural topology (the valuation topology) on $k$; then the map $\delta$ is continuous. 
Since $\kappa_1(G)$ is $k$-wound, $(\kappa_1(G))(k)$ is compact by \cite[Chapter VI, Section 1]{Oe}. On the other hand, since $\kappa^1(G)$ is commutative, $k$-wound and killed by $p$, $\kappa^1(G)$ is $k$-isomorphic to a $k$-subgroup of $\G_a^{d+1}$ which is given as the kernel of a separable $p$-polynomial $F$ in $d+1$ variables, where $d=\dim(\kappa^1(G)) $ and $F$ is considered as a homomorphism $F:\G_a^{d+1}\to \G_a$. One thus finds that  $\H^1(k,\kappa^1(G))\simeq k/F(k^{d+1})$. 
One checks that the subgroup $F(k^{d+1})\subset k$ is open, since $F$ is a separable morphism and we may use the implicit function theorem (see [Se2]) in this case. Hence the topology on $\H^1(k,\kappa^1(G))$ is discrete. Since the map $\delta$ is continuous, its image $\im (\delta)$ is compact in the discrete topological group $\H^1(k,\kappa^1(G))$. Therefore $\im (\delta)$ is finite and by twisting argument (see \cite[Chapter I, Section 5.4, Corollary 3]{Se1}), and the finiteness assumption of $\H^1(k,G)$, we get that $\H^1(k,\kappa^1(G))$ is finite.  
We also know that the natural map $\H^1(k,G) \to \H^1(k,\kappa_1(G))$ is surjective  by Lemma~\ref{lemma:surjective}, so $\H^1(k,\kappa_1(G))$ is also finite.

Similarly, by replacing $G$ by $\kappa_1(G)$ then we can show that $\H^1(k,\kappa^2(G))$ is finite, since $\kappa^2(G)=\kappa(\kappa_1(G))$ by definition. Inductively, we can prove that $\H^1(k,\kappa^i(G))$ is finite for all $i$.

Conversely, assume that $\H^1(k,\kappa^i(G))$ is finite for all $i$. Since $0=\kappa^n(G)=\kappa(\kappa_{n-1}G)$ and $\kappa_{n-1}G$ is connected, $\kappa_{n-1}G$ is trivial. Hence from the exact sequence
\[ 1\to\kappa^{n-1}G\to \kappa_{n-2}G\to\kappa_{n-1}G\to 1,\]
and the assumption that $\H^1(k,\kappa^{n-1}(G))$ is finite, we deduce that $\H^1(k,\kappa_{n-2}(G))$ is finite. Inductively, we can prove that $\H^1(k,\kappa_{n-2}(G))$,$\ldots$, $\H^1(k,\kappa_1(G))$ are finite. Then $\H^1(k,G)$ is also finite.  
 \end{proof}
 
We now have the following main result of this paper. 

\begin{theorem} 
\label{theorem:main}
Let $k$ be a local field of characteristic $p>0$, $G$ a smooth unipotent group defined over $k$. Let $G_s$ be the $k$-split part of $G$. Then $\H^1(k,G)$ is finite if and only if $G$ is connected and $\kappa^i(G/G_s)$ is of Rosenlicht type for all $i$.
\end{theorem}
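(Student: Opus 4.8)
\emph{Proof proposal.} The plan is to reduce the statement, in three steps, to Proposition 9 and Corollary 6.

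\emph{Step 1: connectedness is necessary.} Suppose $G$ is not connected, so that $\pi_0(G):=G/G^{0}$ is a non-trivial finite smooth unipotent $k$-group. Its Frattini quotient is a non-trivial $k$-form of some $(\Z/p)^{n}$, hence has a non-trivial $\Gal(k_s/k)$-irreducible quotient; composing, $\pi_0(G)$ has a non-trivial quotient $F$ which is a finite \'etale $k$-group whose points $F(k_s)$ form a simple $\F_p[\Gamma]$-module, where $\Gamma$ is the finite image of $\Gal(k_s/k)$ in $\operatorname{Aut}(F(k_s))$. Since every simple module over a finite group algebra embeds into the regular representation $\F_p[\Gamma]$, which by the normal basis theorem embeds $\Gamma$-equivariantly into the Galois extension $L/k$ with group $\Gamma$, the module $F(k_s)$ embeds $\Gal(k_s/k)$-equivariantly into $L\subseteq k_s=\G_a(k_s)$; that is, $F$ is a $k$-subgroup of $\G_a$. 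Now $\G_a/F$ is a quotient of the $k$-split group $\G_a$, hence is $1$-dimensional and $k$-split, hence $\cong\G_a$; so $F=\ker(\psi)$ for a separable additive polynomial $\psi\colon\G_a\to\G_a$ of degree $p^{\nu}$ with $\nu\geq 1$, and $\H^1(k,F)\cong k/\psi(k)$. By Lemma 4 in one variable — where $M=\nu$, $r_1=1$, so that $r_1+pr_2+\cdots+p^{M-1}r_M=1<p^{M}$ — the quotient $k/\psi(k)$ is infinite, so $\H^1(k,F)$ is infinite, and by Lemma 8 applied to the quotients $G\to\pi_0(G)$ and $\pi_0(G)\to F$, $\H^1(k,G)$ is infinite. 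Thus finiteness of $\H^1(k,G)$ forces $G$ connected; we assume this henceforth, and then $H:=G/G_s$ is connected and $k$-wound by the Tits decomposition [Ti], [Oe].

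\emph{Step 2: reduction to the $k$-wound quotient.} I claim $\H^1(k,G)$ is finite if and only if $\H^1(k,H)$ is finite. As $G_s$ is a smooth connected unipotent normal (indeed characteristic) $k$-subgroup of $G$, Lemma 8 already gives that $\H^1(k,G)\to\H^1(k,H)$ is surjective, so finiteness of the source implies finiteness of the target. For the converse one controls the fibres of this surjection by the standard twisting argument (cf. [Se1]): the fibre over the class of a cocycle $\xi$ is a quotient of $\H^1(k,{}_{\xi}G_s)$, with ${}_{\xi}G_s$ the inner twist of $G_s$ by $\xi$. Now ${}_{\xi}G_s\otimes k_s\cong G_s\otimes k_s$ is $k_s$-split, and since being $k$-split descends along separable extensions (the $k$-split part commutes with separable base change), ${}_{\xi}G_s$ is again $k$-split; hence $\H^1(k,{}_{\xi}G_s)=1$, because a $k$-split unipotent group is an iterated extension of copies of $\G_a$ and $\H^1(k,\G_a)=0$. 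So each fibre is a single point, $\H^1(k,G)\to\H^1(k,H)$ is bijective, and the claim follows. It remains to show: for the connected $k$-wound group $H$, $\H^1(k,H)$ is finite if and only if $\kappa^{i}(H)$ is of Rosenlicht type for all $i$ (and $\kappa^{i}(H)=\kappa^{i}(G/G_s)$).

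\emph{Step 3: the $k$-wound case, and conclusion.} Apply Proposition 9 to $H$: $\H^1(k,H)$ is finite if and only if $\H^1(k,\kappa^{i}(H))$ is finite for every $i$. By construction of the cckp-kernel, each $\kappa^{i}(H)=\kappa(\kappa_{i-1}(H))$ is a central connected $k$-subgroup killed by $p$, i.e. a commutative unipotent $k$-group of exponent $p$; and it is $k$-wound, because $H$ is $k$-wound, the $k$-wound property passes to the quotients $\kappa_{i-1}(H)$, and the cckp-kernel of a $k$-wound group is $k$-wound (the equivalences recalled just before Lemma 8). Hence Corollary 6 applies to each $\kappa^{i}(H)$ and shows that $\H^1(k,\kappa^{i}(H))$ is finite if and only if $\kappa^{i}(H)$ is of Rosenlicht type. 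Chaining Steps 1--3 proves both directions of the theorem.

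\textbf{Main obstacle.} I expect the crux to be Step 2: showing that the twisted forms of the $k$-split part occurring as fibres of $\H^1(k,G)\to\H^1(k,G/G_s)$ are still $k$-split (equivalently, that $G_s$ and its inner twists are cohomologically trivial). This rests on the $k$-split part being a normal (even characteristic) subgroup and being compatible with separable base change and descent; once these are granted, the rest is a formal assembly of Lemma 8, Proposition 9 and Corollary 6. In Step 1 the only delicate point is the embedding of the non-trivial quotient $F$ into $\G_a$ (via simple modules sitting inside the regular representation, plus the normal basis theorem), after which Lemma 4 does the work.
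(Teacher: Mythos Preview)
Your argument is correct and follows the same three-step skeleton as the paper: pass to $G/G^0$ to force connectedness, pass to $G/G_s$ to reduce to the $k$-wound case, and then invoke Proposition~9 together with Corollary~6. The differences are only in how two of the sub-steps are justified.

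For Step~1 the paper simply cites [TT1, Prop.~4.7] to conclude that a finite \'etale unipotent $k$-group with finite $\H^1$ must be trivial; you instead give a self-contained proof by producing a simple quotient $F\hookrightarrow\G_a$ (via the Frobenius property of $\F_p[\Gamma]$ and the normal basis theorem) and then applying Lemma~4 in one variable. This is a genuinely more elementary route and avoids the external reference.

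For Step~2 you are actually more careful than the paper. The paper writes only ``combining with the fact that $\H^1(k,G_s)$ is trivial \ldots\ we deduce that $\H^1(k,G)$ is finite,'' which tacitly uses that the fibres of $\H^1(k,G)\to\H^1(k,G/G_s)$ are controlled by $\H^1(k,{}_{\xi}G_s)$ for the various twists. You make this explicit and justify that each ${}_{\xi}G_s$ remains $k$-split because the formation of the split part commutes with separable base change; this is the right thing to say, and it is standard (Tits, Oesterl\'e, or [CGP]). So what you flag as the ``main obstacle'' is in fact a routine descent fact, not a genuine difficulty---the real content of the theorem sits entirely in Proposition~9 and Corollary~6, which you invoke exactly as the paper does.
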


\begin{proof}
Assume that $\H^1(k,G)$ is finite. Let $G^0$ be the connected component of $G$, then $G/G^0$ is of dimension 0. Since the natural map $\H^1(k,G)\to \H^1(k,G/G^0)$ is surjective, $\H^1(k,G/G^0)$ is finite. Then by \cite[Proposition 4.7]{TT}, $G/G^0$ is trivial and $G$ is connected. Also, since the natural map $\H^1(k,G)\to \H^1(k,G/G_s)$ is surjective,  $\H^1(k,G/G_s)$ is finite. Hence by Proposition~\ref{prop:finite}, $\H^1(k,\kappa^i(G/G_s))$ is finite for all $i$ since $G/G_s$ is connected and wound over $k$. By Corollary~\ref{cor:Rosenlicht type}, $\kappa^i(G/G_s)$ are of Rosenlicht type since such groups are commutative, $k$-wound and killed by $p$.

Conversely, assume that $G$ is connected and for all $i$, $\kappa^i(G/G_s)$ is of Rosenlicht type. Then by Corollary~\ref{cor:Rosenlicht type}, $\H^1(k,\kappa^i(G/G_s))$ is finite for all $i$. By Proposition~\ref{prop:finite}, $\H^1(k,G/G_s)$ is finite. Since $G_s$ is $k$-split, the natural map $\H^1(k,G)\to \H^1(k,G/G_s)$ is bijective by \cite[Lemma 7.3]{GiMB}. Therefore $\H^1(k,G)$ is finite. 
\end{proof}

\begin{remarks} (1) The unipotent group $G=\{(x,y)\in \G_a\times \G_a \mid y^p-tx^p-x=0\}$ as in Oesterl\'e's example in the Introduction is connected $k$-wound, commutative, killed by $p$ and  of dimension 1. So $\kappa^1(G)=G$  and $\kappa^2(G)=0$.  Using a simple calculation, one can show that the dimension of a unipotent group of Rosenlicht type is always divisible by $p-1$ (see Corollary~\ref{cor:divisible} below). Therefore, $G=\kappa^1(G)$ is not of Rosenlicht type since $\dim G=1<p-1$ (note that we assume that $p>2$ in Oesterl\'e's example). So $\H^1(k,G)$ is infinite in the light of Theorem~\ref{theorem:main}.

(2) If   one allows $p=2$ in Oesterl\'e example in the Introduction, namely, if we consider the case $k=\F_q((t))$ of characteristic $2$ and $G=\{(x,y)\in \G_a\times \G_a \mid y^2-tx^2-x=0\}$, then $\H^1(k,G)$ is finite. Because in this case, $G=\kappa^1(G)$ is of Rosenlicht type, $\kappa^2(G)=0$ and $G$ satisfies the conditions in Theorem~\ref{theorem:main}. One can even compute that the cardinality of $\H^1(k,G)$ is 2 (see \cite[Proposition 4.2(a)]{TT}).

Let $k=\F_q((t))$ be a local field of characteristic 2. To get an example of $G$  over $k$, which has infinite Galois cohomology, one may consider an example of Serre (see \cite[Chapter III, Section 2, Exercise 3, page 130]{Se1}). Namely, take $G=\{(x,y)\in \G_a\times \G_a \mid y^2+y+tx^4=0\}$, then since the polynomial $y^2+y+tx^4$ is not of Rosenlicht type, $\kappa^1(G)=G$ is not of Rosenlicht type by Corollary~\ref{cor:independent}.  Therefore $\H^1(k,G)$ is infinite.

(3) After writing an early version of this paper, we learned that in \cite{CGP}, for a (smooth) \emph{wound} unipotent group $G$ over a field $k$, the authors also define the ascending chain of (smooth) connected normal $k$-subgroups of $G$ as follows: $G_0=0$ and $G_{i+1}/G_{i}$ is the cckp-kernel of the $k$-wound group $G/G_i$ for all $i\geq 0$. These subgroups are stable under $k$-group
automorphisms of $G$, their formation commutes with any separable extension of $k$, and $G_i = G$ for
sufficiently large $i$. See \cite[Corollary B.3.3]{CGP}.

The relation between this ascending chain of subgroups and our cckp-series for a wound unipotent $k$-group $G$  is that
$G_1/G_0=\kappa^1(G)$ and $G_i/G_{i-1}=\kappa^i(G)$ for all $i\geq 1$. And the main theorem (Theorem~\ref{theorem:main}) can be restated as follows.

\begin{thma}
Let $k$ be a local field of characteristic $p>0$, $G$ a smooth unipotent group defined over $k$. Let $G_s$ be the $k$-split part of $G$ and define $U=G/G_s$. Define the ascending chain of (smooth) connected normal $k$-subgroups of $U$ as follows: $U_0=0$ and $U_{i+1}/U_{i}$ is the cckp-kernel of the $k$-wound group $U/U_i$ for all $i\geq 0$. Then $\H^1(k,G)$ is finite if and only if $G$ is connected and $U_{i+1}/U_i$ is of Rosenlicht type for all $i\geq 0$.
\end{thma}
\end{remarks}

\section{Unipotent groups of Rosenlicht type and a question of Oesterl\'e}
In this section we shall make some calculations on unipotent groups of Rosenlicht type of small dimension and give some discussions about a question of Oesterl\'e, see Question~\ref{question:Oe} below.
\subsection{Some calculations}
First we have the following result concerning the dimension of unipotent groups of Rosenlicht type.
\begin{corollary}
\label{cor:divisible}
Let $G$ be a unipotent algebraic group over a non-perfect field $k$ of characteristic $p>0$. If $G$ is of Rosenlicht type then $\dim(G)$ is divisible by $p-1$.
\end{corollary}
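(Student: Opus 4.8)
The plan is to argue directly from the definition of a unipotent group of Rosenlicht type (Definition 5); the entire statement is a congruence manipulation of the two numerical balance conditions, so essentially no geometry is involved.

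First I would unwind the hypothesis: since $G$ is of Rosenlicht type, there is a $k$-isomorphism of $G$ onto a $k$-subgroup of $\G_a^{d+1}$, $d=\dim G$, defined as the kernel of a separable $p$-polynomial $P(T_1,\ldots,T_{d+1})$ whose principal part, after reindexing the variables, has the form displayed in Definition 5 with nonnegative integers $r_1,\ldots,r_M$ ($r_1\geq 1$) satisfying
\[
r_1+r_2+\cdots+r_M=d+1,\qquad r_1+pr_2+p^2r_3+\cdots+p^{M-1}r_M=p^M .
\]
(The condition that $P_{\princ}$ vanishes nowhere on $k^{d+1}\setminus\{0\}$ plays no role in this particular statement.)

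Next I would subtract the first equation from the second, which gives
\[
(p-1)r_2+(p^2-1)r_3+\cdots+(p^{M-1}-1)r_M=p^M-(d+1).
\]
Since $p-1$ divides $p^j-1$ for every $j\geq 1$, the left-hand side is divisible by $p-1$, hence so is $p^M-(d+1)$. Finally, because $p\equiv 1\pmod{p-1}$ we have $p^M\equiv 1\pmod{p-1}$, so $d+1\equiv 1\pmod{p-1}$, i.e.\ $(p-1)\mid d=\dim(G)$, as claimed.

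I do not expect a genuine obstacle here: the result follows formally once the correct normalization from Definition 5 is in hand (in particular that the ambient group is $\G_a^{d+1}$ with $d=\dim G$ and that the two balance conditions hold simultaneously), and the non-perfectness of $k$ enters only insofar as it is part of the standing hypotheses under which Rosenlicht type is defined.
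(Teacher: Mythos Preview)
Your proof is correct and follows essentially the same approach as the paper: both unwind Definition~5, subtract the two numerical conditions, and use that $p-1\mid p^j-1$ together with $p^M\equiv 1\pmod{p-1}$ to conclude $(p-1)\mid d$. (In fact your version is slightly cleaner; the paper's displayed identity has a sign slip on the right-hand side, though the conclusion is unaffected.)
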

\begin{proof}Let $\dim G=d$ and $G$ is defined by a separable $p$-polynomial $P$ as in Definition 5. Then for some integers  $M>0$, $r_1>0$, $r_2,\ldots,r_M\geq 0$, we have 
\begin{gather} 
r_1+r_2+\cdots +r_M=d+1\tag{3.1} \\
r_1+ pr_2+\cdots+p^{M-1}r_M= p^M \tag{3.2}
\end{gather}
By subtracting (3.1) from (3.2), we get $(p-1)r_2+\cdots+(p^{M-1}-1)r_M=(p^M-1)-d$. This yields that $p-1$ divides $d$ since $p^i-1$ is divisible by $p-1$ for all $i$.
\end{proof}

For $d=k(p-1)$, we want to solve the following equations with integer variables $M,r_1\geq 1$, $r_2,\ldots,r_M\geq 0$: 
\begin{gather} 
r_1+r_2+\cdots +r_M=k(p-1)+1 \tag{3.3}\\
r_1+ pr_2+\cdots+p^{M-1}r_M= p^M \tag{3.4}
\end{gather}
Equation (3.4) yields that $p$ divides $r_1$, and then $r_1=\ell_1p$, for some integer $\ell_1\geq 1$. By substituting $r_1=\ell_1p$ in (3.4), we have $\ell_1+r_2+\cdots+p^{M-2}r_{M}=p^{M-1}$. Then $\ell_1+r_2=l_2p$, for some integer $\ell_2\geq 1$. Similarly, there are natural numbers $\ell_3,\ldots,\ell_M\geq 1$ such that
\begin{equation}
r_1=\ell_1p, \ell_1+r_2=\ell_2p, \ell_2+r_3=\ell_3p,\ldots, \ell_{M-1}+r_M=\ell_Mp.\tag{3.5}
\end{equation}
By substituting (3.5) in (3.3)-(3.4), we get     
\begin{equation*}
 \begin{cases} 
\ell_1+\ell_2+\cdots +\ell_M=k\\
\ell_M=1
\end{cases}
\end{equation*}
For example: 

If $k=1$ then $M=1$, $\ell_1=1$. 

If $k=2$ then $M=2$, $\ell_1=\ell_2=1$. 

If $k=3$ then $M=2$, $\ell_1=2, \ell_2=1$ or $M=3$, $\ell_1=\ell_2=\ell_3=1$.

From the above calculations, we have the following proposition. 
\begin{proposition}
\label{prop:calculation}
 Let $k$ be a non-perfect field of characteristic $p>0$. 
 \begin{enumerate}
 \item[(a)] Every unipotent $k$-groups of Rosenlicht type  of dimension $p-1$ \textup{(}resp. $2(p-1)$\textup{)} is $k$-isomorphic to a $k$-subgroup of $\G_a^p$ \textup{(}resp. $\G_a^{2p-1}$\textup{)} defined as the kernel of a separable $p$-polynomial $P(T_1,\ldots,T_p)$ \textup{(}resp. $P(T_1,\ldots,T_{2p-1})$\textup{)} with the principal part of the form
\[ P_{\princ}=c_1T_1^p+\cdots+c_pT_p^p, \]
\[ (\text{resp. } P_{\princ}=c_1T_1^{p^2}+\cdots+c_pT_p^{p^2}+c_{p+1}T_{p+1}^p+\cdots+c_{2p-1}T_{2p-1}^p,)\]
which  vanishes nowhere over $k^p \setminus\{0\}$ \textup{(}resp. $k^{2p-1} \setminus\{0\}$\textup{)}. 
\item[(b)] Every unipotent $k$-groups of Rosenlicht type  of dimension $3(p-1)$ is $k$-isomorphic to a $k$-subgroup of $\G_a^{3p-2}$ defined as the kernel of a separable $p$-polynomial $P(T_1,\ldots,T_{3p-2})$ with the principal part of the form
\[ P_{\princ}=c_1T_1^{p^3}+\cdots+c_{2p}T_{2p}^{p^3}+c_{2p+1}T_{2p+1}^p+\cdots+c_{3p-2}T_{3p-2}^p,\]
or of the form
\[ P_{\princ}=c_1T_1^{p^3}+\cdots+c_pT_p^{p^3}+c_{p+1}T_{p+1}^{p^2}+\cdots+c_{2p-1}T_{2p-1}^{p^2}+c_{2p}T_{2p}^p+\cdots+c_{3p-2}T_{3p-2}^p,\]
which  vanishes nowhere over $k^{3p-2} \setminus\{0\}$. 
\end{enumerate}
\end{proposition}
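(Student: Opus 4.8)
The plan is to read the statement directly off Definition~5 together with the Diophantine analysis carried out immediately before the proposition. By Definition~5, a unipotent $k$-group $G$ of Rosenlicht type with $\dim G=d$ is $k$-isomorphic to a $k$-subgroup of $\G_a^{d+1}$ cut out as the kernel of a separable $p$-polynomial $P(T_1,\ldots,T_{d+1})$ whose principal part
$$P_{\princ}=\sum_{i=1}^{r_1}c_iT_i^{p^M}+\sum_{i=r_1+1}^{r_1+r_2}c_iT_i^{p^{M-1}}+\cdots+\sum_{i=r_1+\cdots+r_{M-1}+1}^{r_1+\cdots+r_M}c_iT_i^{p}$$
vanishes nowhere on $k^{d+1}\setminus\{0\}$ and whose block sizes satisfy $r_1\geq 1$, $r_i\geq 0$, together with $r_1+\cdots+r_M=d+1$ and $r_1+pr_2+\cdots+p^{M-1}r_M=p^M$. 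Thus the entire content of the proposition is the enumeration, for $d=p-1,2(p-1),3(p-1)$, of the admissible block vectors $(r_1,\ldots,r_M)$; these are exactly the solutions of (3.3)--(3.4) for $k=1,2,3$, and once they are listed one simply substitutes each into the displayed shape of $P_{\princ}$.

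So the first step is to invoke the reduction already made: reading (3.4) successively modulo $p,p^2,\ldots$ forces the substitution (3.5), under which (3.3)--(3.4) becomes $l_1+\cdots+l_M=k$, $l_M=1$ in positive integers $l_1,\ldots,l_M$, with the $r_i$ recovered from the $l_i$ by (3.5). For $k=1$ the unique solution is $M=1$, $l_1=1$, giving $r_1=p$ and $d+1=p$, hence $P_{\princ}=\sum_{i=1}^{p}c_iT_i^{p}$. For $k=2$ the unique solution is $M=2$, $l_1=l_2=1$, giving $(r_1,r_2)=(p,p-1)$ and $d+1=2p-1$, hence $P_{\princ}=\sum_{i=1}^{p}c_iT_i^{p^2}+\sum_{i=p+1}^{2p-1}c_iT_i^{p}$. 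For $k=3$ there are exactly two solutions: $M=2$ with $(l_1,l_2)=(2,1)$, giving $(r_1,r_2)=(2p,p-2)$; and $M=3$ with $(l_1,l_2,l_3)=(1,1,1)$, giving $(r_1,r_2,r_3)=(p,p-1,p-1)$. In both $k=3$ cases $d+1=3p-2$, and substituting these two block vectors into the displayed form of $P_{\princ}$ yields precisely the two shapes exhibited in part~(b). The clause that $P_{\princ}$ vanishes nowhere on $k^{d+1}\setminus\{0\}$ is in each case inherited verbatim from the definition of Rosenlicht type.

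The only genuine work is to make the enumeration airtight, i.e.\ to be sure no solution is overlooked: since the $l_i$ are $\geq 1$ and sum to $k$ one has $M\leq k$; the value $M=1$ is possible only when $k=1$, as it forces $l_1=k$ and $l_M=l_1=1$; and for each remaining $M$ the two constraints $l_M=1$ and $l_1+\cdots+l_M=k$ determine the $l_i$ uniquely in the ranges at hand. I would also flag the harmless degeneracy that $r_2=p-2$ in the first $k=3$ shape vanishes when $p=2$, so that this form then reduces to its single top block. Beyond this bookkeeping there is no analytic or cohomological ingredient; the proposition is an immediate corollary of the numerical classification preceding it, so I anticipate no real obstacle.
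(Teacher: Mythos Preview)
Your proposal is correct and follows exactly the paper's approach: the proposition is stated there as an immediate consequence of the preceding enumeration of solutions to (3.3)--(3.4) for $k=1,2,3$ (via the substitution (3.5)), with no further argument given. One minor remark: your block vector $(r_1,r_2)=(2p,p-2)$ with $M=2$ gives top exponent $p^2$, so the $p^3$ appearing in the first displayed form of part~(b) is evidently a typographical slip in the paper rather than a gap in your derivation.
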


\subsection{Oesterl\'e's construction} 
\label{subsec:Oe's construction}
We now recall Oesterl\'e's construction associating a torus defined over a non-perfect field of positive characteristic with a smooth unipotent group defined  and wound over that field (see \cite[Chapter VI, 5]{Oe}). Let $k$ be a field of characteristic $p>0$. Let $T$ be a $k$-torus, $k^\prime$ a finite purely inseparable extension of $k$ of degree $p^n$. Denote by $G=\prod_{k^\prime/k}(T\times_k k^\prime)$, the Weil  restriction from $k^\prime$ to $k$ of $T$ where $T$ is considered as an algebraic group over $k^\prime$. Then  $G$ is connected and commutative and $T$ is a maximal torus of $G$. Denote by $U(T,k,k^\prime)$ (or simply by $U$)  the quotient group $G/T$, then $U$ is a $k$-wound unipotent group. We show that if $k$ is a local or global function field then the groups in the cckp-series of $U$, $\kappa^i(U)$ are of Rosenlicht type. 

\begin{proposition}
\label{prop:Oe's construction}
  Let $k$ be a local (or global) field of characteristic $p>0$, $k^\prime$ a finite purely inseparable extension of $k$, $T$ a $k$-torus. Let $U$ be the unipotent group associated with $T$ as above. Then the groups $\kappa^i(U)$ are of Rosenlicht type.
  
  In particular, $U$ has a subgroup of Rosenlicht type.
 \end{proposition}
 
 \begin{proof} First we prove the proposition for the case $k$ is a local field. 
 Let $[k^\prime:k]=p^n$. Since $U$ is connected and wound over $k$, $\kappa^i(U)$ are all of Rosenlicht type if and only if $\H^1(k,U)$ is finite by Proposition~\ref{prop:finite}, which is in turn equivalent to the fact that both groups $\H^1(k,T)/p^n$ and $_{p^n}\H^2(k,T)$ are finite (see \cite[Proposition 5.1]{TT}), where for an abelian group $A$ and a natural number $n$, $A/n$ (resp. $_nA$) is the cokernel (resp. kernel) of the natural endomorphism $A\to A, x\mapsto nx$. 
 
Let $X(T)=\Hom(T,\G_m)$ be the character group of $T$. Let $\H^0(k, X(T))^{\wedge}$ be the completion of the abelian group $\H^0(k,X(T))$ for the topology given by subgroups of finite index. Then by the Tate-Nakayama duality (see \cite[Chapter I, Corollary 2.4]{Mi}), there is a duality between the compact group ${\H^0(k, X(T))}^{\wedge}$ and the discrete group $\H^2(k,T)$. Furthermore, the group $\H^1(k,T)$ is finite. Since $X(T)$ is a free abelian group of finite rank, we deduce that the group $_{p^n}\H^2(k,T)$ is finite. Hence $\H^1(k,U)$ is finite and  $\kappa^i(U)$ are all of Rosenlicht type.

Now we prove the proposition for the case $k$ is a global field. Let  $v$ be a place of  the global field $k$ and let $k_v$ be the completion of $k$ at $v$.  Then $k_v$ is a separable extension of $k$, see the proof of \cite[Chapter VI, 2.1, Proposition]{Oe}.
By the local case above, the groups in the cckp-kernel series of $U \times_k k_v$ (the base change of $U$ from $k$ to $k_v$) are of Rosenlicht type. Hence the groups $\kappa^i(U)$ are also of Rosenlicht type since the property of being of Rosenlicht type is unchanged under separable extentions and the formation of $\kappa^i(U)$ commutes with any separable extension on $k$.
\end{proof}

\begin{corollary}
Notations being as in Proposition~\ref{prop:Oe's construction}, if $\dim T=1$, $[k^\prime:k]=p$, then $U$ is $k$-isomorphic to a subgroups of $\G_a^p$ defined as the kernel of a $p$-polynomial of the form
$c_1T_1^p+\cdots+c_pT_p^p+T_p$, where $c_1,\ldots,c_p$ is a $k^p$-basis of $k$.
\end{corollary}
\begin{proof}
One has $\dim U=p-1$. By Proposition~\ref{prop:Oe's construction} and Corollary~\ref{cor:divisible}, $U$ is itself of Rosenlicht type. By Proposition~\ref{prop:calculation}, $U$ is $k$-isomorphic to a subgroups of $\G_a^p$ defined as the kernel of a $p$-polynomial of the form
$c_1T_1^p+\cdots+c_pT_p^p+aT_p$, where $c_1,\ldots,c_p$ is a $k^p$-basis of $k$, and by changing variables we can take $a=1$. 
\end{proof}

\begin{remark}
Let $k$ be a field of characteristic $p>0$ such that $[k^{1/p}:k]=p$, and $t$ an element in  $k-k^p$. Then, with $T=\G_m$ and $k^\prime=k^{1/p}$, Oesterl\'e shows explicitly that $U=U(T,k,k^\prime)$ is $k$-isomorphic to a $k$-subgroup of $\G_a^p$ defined by the equation $x_0^p+tx_1^p+\cdots+t^{p-1}x^{p}_{p-1}=x_{p-1}$. 
\end{remark}

We would like to ask whether all unipotent groups of Rosenlicht type over a local or global field are obtained by using Oesterl\'e's construction (with a suitable torus and a suitable finite purely separable extension $k^\prime/k$) as in Proposition~\ref{prop:Oe's construction}. Namely, we have
\begin{question} 
\label{question:4}
Is any unipotent group of Rosenlicht type over a local (or global) field arisen as a group in the $cckp$-series of a unipotent group constructed by Oesterl\'e as in Proposition~\ref{prop:Oe's construction} (associated with the  purely inseparable Weil restriction of a torus)?
\end{question}

 \subsection{Oesterl\'e's question}
If  $k$ is a global function field and $\dim T>0$ then the group $U$ constructed as in Subsection~\ref{subsec:Oe's construction} is $k$-unirational, in particular, the group of $k$-rational points $U(k)$ is infinite (see \cite[Chapter VI, 5.1, Lemma]{Oe}). Oesterl\'e even raised  the following question (see \cite[page 80]{Oe}):

\begin{question}[Oesterl\'e]
\label{question:Oe}
If  a wound unipotent group $G$ has an infinite number of rational points over a global function field $K$,
\begin{enumerate}
 \item Does $G$ have a subgroup defined over $K$ of dimension $\geq 1$ such that its underlying variety is $K$-unirational? 
\item Better, does $G$ have a subgroup $U$ of the type as  above (associated with the purely inseparable Weil restriction of a torus)?
\end{enumerate}
\end{question}

In seeking an answer for  Question~\ref{question:Oe} part (2) and by looking at Proposition~\ref{prop:Oe's construction}, the following question is arisen quite naturally. 
\begin{question}
\label{question:2}
 Let $K$ be a global function field, $G$ a $K$-wound unipotent $K$-group. Assume that $G(K)$ is infinite. Is it true that $G$ has a subgroup $H$ of Rosenlicht type?    
\end{question}
An affirmative answer for the part (2) of Question~\ref{question:Oe} could give rise to an affirmative answer for Question~\ref{question:2} by Proposition~\ref{prop:Oe's construction}. (And of course, if one could find a counter-example for Question~\ref{question:2}, one then also finds a counter-example for Question~\ref{question:Oe}, part (2).)

We conclude the paper by raising one more question concerning unipotent groups of Rosenlicht type, which is hopefully related to Oesterl\'e's question.

\begin{question} 
\label{question:3}
Let $G$ be a unipotent group of Rosenlicht type over a global function field $K$ of positive characteristic. Is the group of $K$-rational points $G(K)$ infinite? Is the underlying variety of $G$ is $K$-unirational?
\end{question}

\begin{remarks}
(1 ) If the answer for Questions~\ref{question:2} and \ref{question:3} are both YES, then the answer for Question~\ref{question:Oe}, part (1) is also YES. We also note that \cite[Chapter VI, 3.4, Proposition]{Oe} shows that Questions~\ref{question:2} and \ref{question:3} both have positive answers when the characteristic of $K$ is 2 and $\dim G=1$.

(2) If the answer for Questions~\ref{question:4} and \ref{question:2} are both YES, then we can give a partial  answer for Question~\ref{question:Oe}, part (2). Namely, notations being as in these questions, then we can show that $G$ has a non-trivial subgroup of the type  $\kappa^i(U)$ for some $U$ of the type as in Proposition~\ref{prop:Oe's construction}.

(3) Questions~\ref{question:2} and \ref{question:3} deal with unipotent groups of Rosenlicht type, which are defined by "concrete" equations. So we hope that it is easier to find answers for these questions than to find an answer for Oesterl\'e's question (Question~\ref{question:Oe})
\end{remarks}

{\bf Acknowledgments:} We are pleased to thank Nguyen Quoc Thang for his careful guidance and encouragement. This paper contains some results presented in a talk given by the author in the seminar Vari\'et\'es Rationnelles at the Ecole Normale Sup\'erieure. We thank Phillipe Gille for remarks and discussions related to the results presented here. We would like to thank H\'el\`ene Esnault and Eckart Viehweg for their support  and stimulus. We would like to thank the referee for his/her very useful comments and corrections, which helps us to improve the paper.


\begin{thebibliography}{SGA 45}
\bibitem[Bo]{Bo} A. Borel, \emph{Linear Algebraic Groups (2nd ed.)}, Graduate texts in mathematics {\bf 126}, New York: Springer-Verlag 1991.

\bibitem[CGP]{CGP} B. Conrad, O. Gabber and G. Prasad, {\it Pseudo-reductive groups}, Series: New Mathematical Monographs (No. 17), Cambridge University Press, Cambridge, 2010.

\bibitem[DK]{DK} L. van den Dries and F.-V. Kuhlmann, Images of additive polynomials in ${\bf F}_q((t))$ have the optimal approximation property, Canadian Math. Bulletin, v. 45 (2002), 71-79. 

\bibitem[GiMB]{GiMB} P. Gille and L. Moret-Bailly,  Actions alg\'ebriques de groupes arithm\'etiques, to appear in "Torsors, theory and applications", Edimbourg (2011), Proceedings of the London Mathematical Society, edited by V. Batyrev and A. Skorobogatov.

\bibitem[KMT]{KMT}T. Kambayashi, M. Miyanishi and M. Takeuchi, {\it Unipotent algebraic groups}, Lecture Notes in Math. v. 414, Springer-Verlag, 1974.

\bibitem[Ku]{Ku} F.-V. Kuhlmann, Elementary properties of power series fields over finite fields, J. Symbolic Logic (2) 66(2001), 771-791.

\bibitem[Mi]{Mi} J. S. Milne, {\it Arithmetic Duality Theorems} (second ed.), version of March 2004.

\bibitem[Oe]{Oe} J. Oesterl\'e, Nombre de Tamagawa et  groupes unipotents en caract\'eristique $p$,  Invent. Math.  v. 78 (1984), 13-88. 
\bibitem[Se1]{Se1} J.-P. Serre, {\it Galois cohomology}, Springer, 1997.

\bibitem[Se2]{Se2} J.-P. Serre, {\it Lie algebras and Lie groups}, Harvard Univ. Lecture Notes, Benjamin, 1964.

\bibitem[TT]{TT} N. Q. Thang and N. D. Tan, On the Galois and flat cohomology of unipotent algebraic groups over local and global function fields, I, Journal of Algebra, 319  (2008) 4288-4324.

\bibitem[Ti]{Ti} J. Tits, {\it Lectures on Algebraic Groups}, Yale Univ., 1967. 

\end{thebibliography}
\end{document}